\newtheorem*{rep@theorem}{\rep@title}
\newcommand{\newreptheorem}[2]{%
\newenvironment{rep#1}[1]{%
 \def\rep@title{#2 \ref{##1}}%
 \begin{rep@theorem}}%
 {\end{rep@theorem}}}
\theoremstyle{plain}
\newtheorem{theorem}{Theorem}[section]
\newtheorem{lemma}[theorem]{Lemma}
\newtheorem{proposition}[theorem]{Proposition}
\newtheorem*{btheorem}{Basis Theorem}
\newtheorem*{stheorem}{Stallings' Folding Theorem}
\theoremstyle{definition}
\newtheorem{definition}[theorem]{Definition}
\newtheorem{example}[theorem]{Example}
\theoremstyle{remark}
\newtheorem{remark}[theorem]{Remark}
\def\aut{{\rm{Aut}}}
\def\ssm{\smallsetminus}
\def\SL{{\rm{SL}}}
\def\GL{{\rm{GL}}}
\def\G{\mathcal{G}}
\def\r{\rho}
\def\ian{{\rm{IA}}_n}
\def\Fix{{\rm{Fix}}}
\address{Department of Mathematics, University of Utah, Salt Lake City, UT 84112}
\email{wade@math.utah.edu}
\subjclass[2010]{20E05, 20E36, 20F65}
\author{Richard D. Wade}
\title{Folding free-group automorphisms}
\begin{document}

\begin{abstract}
We describe an algorithm that uses Stallings' folding technique to decompose an element of $\aut(F_n)$ as a product of Whitehead automorphisms (and hence as a product of Nielsen transformations.) This algorithm is known to experts, but has not yet appeared in the literature. We use the algorithm to give an alternative method of finding a finite generating set for the subgroup of $\aut(F_n)$ that fixes a subset $Y$ of the basis elements, and the subgroup that fixes each element of $Y$ up to conjugacy. We show that the intersection of this latter subgroup with $IA_n$ is also finitely generated.
\end{abstract}

\maketitle

\section{Introduction}

The idea of controlling cancellation between words in a group can be traced along a line of thought spanning the twentieth century, from Nielsen's 1921 paper \cite{N} showing that a finitely generated subgroup of a free group is free, through to the combinatorial and geometric methods in small cancellation theory now prevalent in the study of group actions on CAT(0) and hyperbolic complexes. In the free group, Nielsen's method of \emph{reduction} was extended and given a topological flavour by Whitehead, who looked at sphere systems in connected sums of copies of $S^1 \times S^2$ \cite{MR1503309}. Whitehead's idea of \emph{peak reduction} was refined and recast in combinatorial language by Rapaport~\cite{MR0131452}, Higgins and Lyndon \cite{MR0340420}, and McCool \cite{MR0396764}. There is a good description of this viewpoint in Lyndon and Schupp's book on combinatorial group theory \cite{MR0577064}.

Peak reduction is very powerful. Given a finite set $Y$ of elements in $F_n$, McCool~\cite{MR0396764} gives an algorithm to obtain finite presentations of $\Fix(Y)$ and $\Fix_c(Y)$, the subgroups of $\aut(F_n)$ that fix $Y$ pointwise, and fix each element of $Y$ up to conjugacy, respectively. Culler and Vogtmann's work on Outer Space shows that such subgroups also satisfy higher finiteness properties \cite{MR830040}.

The generating sets for $\Fix(Y)$ and $\Fix_c(Y)$ are built up out of \emph{Whitehead Automorphisms}. These are automorphisms of two types. The first consists of the group $W_n$ of automorphisms that permute and possibly invert elements of a fixed basis. So if $F_n$ is generated by $X=\{x_1,\ldots,x_n\}$, then for each $\phi \in W_n$ there exists $\sigma \in S_n$ and $\epsilon_1,\ldots,\epsilon_n \in \{-1,1\}$ such that $\phi(x_i)=x_{\sigma(i)}^{\epsilon_i}$. For the second type, we pick an element $a \in X \cup X^{-1}$ and for each basis element, we either pre-multiply by $a$, post-multiply by $a^{-1}$, or do both of these things. Traditionally this is defined by taking a subset $A \subset X \cup X^{-1}$ such that $a \in A$ and $a^{-1} \not \in A$, and defining $(A,a) \in \aut(F_n)$ by \begin{equation*}(A,a)(x_j)= \begin{cases} x_j &\text{if $x_j=a^{\pm1}$} \\ a^{\alpha_j}x_ja^{-\beta_j} &\text{if $x_j \neq a^{\pm 1}$} \end{cases},\end{equation*}
where $\alpha_j=\chi_A(x_j)$ and $\beta_j=\chi_A(x_j^{-1})$. 

Beyond the work of Nielsen and Whitehead, a third approach to reduction in free groups comes from Stallings \cite{MR695906}, who cast Nielsen reduction in terms of folds on graphs. Since `Topology of finite graphs' appeared in 1983, folding has become a key tool in geometric group theory, notably in its applications to graphs of groups and their deformation spaces \cite{GL,KWM,F}, and to the dynamics of free group automorphisms (and endomorphisms) \cite{BH,FH,DV}. In this paper we give an account of how folding gives an algorithm to decompose an automorphism as a product of Whitehead automorphisms. This algorithm is hinted at by Stallings \cite[Comment 8.2]{MR695906}, and will be familiar to many authors who have used his techniques, but no explicit account appears in the literature. However, Carette's thesis \cite{Carette} uses Stallings folds to give not only finite generation, but finite presentations for automorphism groups of free products of groups (under a natural hypothesis on the factors).

The chief advantage of folding over peak reduction is the ease of application: folding a graph is less complicated than searching through a list of possible Whitehead automorphisms (a list that grows exponentially with $n$). Moreover, folding gives an intuitive, pictorial way of looking at the decomposition.  The proofs in this paper are geared towards making it easy to produce such calculations by hand or with a computer.

Finite generation for many subgroups of the form $\Fix(Y)$ and $\Fix_c(Y)$ also follows very naturally from this description. In Section \ref{s:applications} we show that if $Y$ is a subset of our preferred basis for $F_n$ then the folding algorithm implies that $\Fix(Y)$ and $\Fix_c(Y)$ are generated by the Whitehead automorphisms that lie in $\Fix(Y)$ and $\Fix_c(Y)$, respectively (see Figure \ref{Graphs} for a quick idea of how this is done.) We apply this result to show that when $Y$ is a subset of a basis the intersection of $\Fix_c(Y)$ with $IA_n$, the subgroup of $\aut(F_n)$ acting trivially on $H_1(F_n)$, is also finitely generated. In particular, we give a description of Magnus' proof that $IA_n$ is finitely generated.

\section{Graphs, Folding, and associated automorphisms} \label{s:graphs}

The fundamental group of a graph gives a pleasant pictorial description of the free group, and can be thought of as both a topological and a combinatorial construction. In this paper we will focus on the latter approach, borrowing most of our notation from Serre's book \cite{serretrees}. Proofs in this first section will either be sketched or omitted. 

\subsection{The fundamental group of a graph}


\begin{definition}
A \emph{graph} $G$ consists of a tuple $(\text{E}G,\text{V}G,inv,\iota,\tau)$ where $\text{E}G$ and $\text{V}G$ are sets and $inv:\text{E}G\rightarrow \text{E}G$, $\iota,\tau : \text{E}G \rightarrow \text{V}G$  are maps which satisfy
\begin{align*}
 inv(e) &\neq e \\
inv(inv(e)) &= e \\
\iota(inv(e))&=\tau(e).
\end{align*}
$\text{E}G$ is said to be the \emph{edge set} of $G$ and $\text{V}G$ the \emph{vertex set} of $G$. For an edge $e \in \text{E}G$ we write $inv(e)=\bar{e}$, and say that $\iota{(e)},\tau(e)$ are the \emph{initial} and \emph{terminal} vertices of $e$ respectively.
\end{definition}


   A \emph{path} $p$ in $G$ is either a sequence of edges $e_1, \ldots e_k$ such that $\iota{(e_{i+1})}=\tau{(e_i)}$, or a single vertex $v$. Let $\text{P}G$ be the set all paths.  The functions $inv$, $\iota$ and $\tau$ extend to $\text{P}G$; in the case where $p$ is a sequence of edges we define $\iota(p)=\iota(e_1)$, $\tau(p)=\tau(e_k)$ and $\bar{p}=\bar{e}_k, \ldots \bar{e}_1$, and evaluate these functions at $v$ if $p$ is a single vertex $v$. We say that $G$ is connected if for any two vertices $v,w$ there exists a path $p$ such that $\iota{(p)}=v$ and $\tau{(p)}=w$. If $\tau(p_1)=\iota(p_2)$ we define $p_1.p_2$ to be the concatenation of the two sequences. We define an equivalence relation $\sim$ on P$G$ by saying two paths $p_1$, $p_2$ are equivalent if and only if one can be obtained from the other by insertion and deletion of a sequence of pairs of edges of the form $(e,\bar{e})$.  We say that a path $p$ is \emph{reduced} if there are no consecutive edges of the form $(e,\bar{e})$ in $p$. 
   
\begin{proposition}
Every element of $\text{\emph{P}}G / \sim$  is represented by a unique reduced path.  For $p \in \text{\emph{P}}G$ we let $[p]$ denote the reduced path in the equivalence class of $p$.
\end{proposition}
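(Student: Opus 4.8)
The plan is to prove existence and uniqueness separately, with uniqueness being where the real work lies.

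For existence I would argue by induction on the length (number of edges) of a representative. A single-vertex path is already reduced, and if a path $p$ of positive length is not reduced then it contains a consecutive pair $(e,\bar e)$; deleting this pair produces a path $p'$ with $p' \sim p$ and strictly fewer edges. Since the length is a non-negative integer this process terminates, yielding a reduced path equivalent to $p$. Hence every class in $\text{P}G/\sim$ contains at least one reduced path.

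For uniqueness the danger is that the reduced representative might depend on the order in which pairs are cancelled, so I would remove all choice by building a canonical reduction function. Let $\mathcal{R}$ be the set of reduced paths. For a reduced path $r$ and an edge $f$ with $\tau(r)=\iota(f)$, define a \emph{reduced append} by
\begin{equation*}
r * f = \begin{cases} r' & \text{if } r = r'.\bar f, \\ r.f & \text{otherwise,} \end{cases}
\end{equation*}
with the convention that when $r$ is the single vertex $\iota(f)$ we are in the second case and $r*f=f$. One checks directly that $r*f$ is again reduced, so $*$ is an operation on $\mathcal{R}$. I would then define $\rho:\text{P}G\to\mathcal{R}$ by reading a path from left to right: $\rho(v)=v$ on a single vertex, and $\rho(e_1\cdots e_k) = (\cdots((\iota(e_1) * e_1) * e_2)\cdots)* e_k$. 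Three facts drive the argument. First, $\rho(p)$ is reduced and $\rho(p)\sim p$, since each application of $*$ either appends an edge or deletes a cancelling pair. Second, if $p$ is already reduced then $\rho(p)=p$, because appending never triggers the cancellation clause. Third, and crucially, $\rho$ is constant on $\sim$-classes. Granting these, uniqueness is immediate: if $p\sim q$ are both reduced then $p=\rho(p)=\rho(q)=q$.

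The main obstacle is the third fact. Since $\sim$ is generated by insertion and deletion of pairs $(e,\bar e)$, it suffices to show $\rho(p_1.e.\bar e.p_2)=\rho(p_1.p_2)$, and by processing $p_1$ first this reduces to the single identity
\begin{equation*}
(r * e) * \bar e = r \qquad \text{for every reduced } r \text{ with } \tau(r)=\iota(e).
\end{equation*}
I would verify this by the two cases in the definition of $*$: if $r$ does not end in $\bar e$ then $r*e = r.e$ ends in $e$, so appending $\bar e$ cancels and returns $r$; if $r = r'.\bar e$ then $r*e = r'$, and here reducedness of $r$ guarantees that $r'$ does not end in $e$, so appending $\bar e$ merely re-attaches it, again giving $r$. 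It is precisely this second case, where reducedness of the input is used, that I expect to be the only genuinely delicate point; once $\rho$ agrees after processing $e\bar e$ it agrees after appending the remaining edges of $p_2$, and the induction closes. An alternative route to uniqueness, which I would mention as a remark, is to regard deletion of cancelling pairs as a terminating rewriting system and check local confluence — the sole overlap being a subword of the form $e\bar e e$, where both cancellations produce the same result — so that Newman's lemma yields a unique normal form.
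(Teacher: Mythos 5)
Your proof is correct. Note, however, that the paper itself gives no argument here at all: it states at the start of Section 2 that ``Proofs in this first section will either be sketched or omitted,'' and this proposition is one of the omitted ones (it is the standard free-reduction fact, treated for instance in Serre's \emph{Trees} and in Lyndon--Schupp). So there is no proof in the paper to compare against; the comparison is with the standard arguments in the literature, and yours is one of them: the canonical left-to-right reduction map $\rho$ is essentially the van der Waerden trick, and you correctly isolate the one delicate point, the identity $(r*e)*\bar e=r$, whose second case is exactly where reducedness of $r$ is needed (if $r=r'.\bar e$ then $r'$ cannot end in $e$, so appending $\bar e$ restores $r$ rather than cancelling further). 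Your existence argument by induction on length is fine, and the implicit fact that makes the reduction to that single identity legitimate --- that $\rho(p.q)$ is computed by processing $q$ starting from $\rho(p)$ --- is immediate from the left-to-right definition. What this approach buys is a completely choice-free normal form, so uniqueness never has to grapple with different cancellation orders; the alternative you sketch (termination plus local confluence of the rewriting system, with the sole critical overlap $e\bar e e$, then Newman's lemma) is equally standard and arguably shorter, at the cost of invoking an external lemma. Either version would serve as a complete proof of the proposition the paper leaves unproved.
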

   
   The set of reduced paths that begin and end at a vertex $v$ in $G$ form a group that we shall denote $\pi_1(G,v)$, the \emph{fundamental group of $G$ based at $v$}.  Multiplication is defined as follows --- if $p,q$ are reduced paths, then $p \cdot q = [p.q]$. The identity element is the path consisting of the single vertex $v$, and the inverse of a reduced path $p$ is the path $\bar{p}$. A path $p_{vw}$ connecting vertices $v$ and $w$ in $G$ induces an isomorphism $[p] \mapsto [p_{vw}.p.\overline{p_{vw}}]$ between $\pi_1(G,w)$ and $\pi_1(G,v)$. A \emph{subgraph} of $G$ is given by subsets of $\text{E}G$ and $\text{V}G$ which are invariant under the operations $inv$ and $\iota$.  A connected graph $T$ is called a \emph{tree} if $\pi_1(T,v)$ is trivial for a (equivalently, any) vertex $v$ of $T$.  We say that $T$ is a \emph{maximal tree} in a connected graph $G$ if $T$ is a subgraph of $G$, $T$ is a tree, and the vertex set of $T$ is $\text{V}G$.  Such a tree always exists. Given a base point $b$ in a 
connected graph $G$ and a maximal tree $T$, there exists a unique reduced path $p_v$ from $b$ to $v$. An \emph{orientation} of a subgraph $G' \subset G$ is a set $\mathcal{O}$ that contains exactly one element of $\{e,\bar{e}\}$ for each element of $G'$. An \emph{ordered orientation} of $G'$ is an orientation $\mathcal{O}$ of $G'$ with an enumeration of the set $\mathcal{O}$.
   
\begin{proposition}\label{p:tree orientation}
Let $T$ be a maximal tree in a connected graph $G$ with chosen base point $b$. Then we can define an orientation $\mathcal{O}(T,b)$ of $T$ by saying that $e \in \mathcal{O}(T,b)$ if an only if $e$ occurs as an edge in a path $p_v$ for some $v$. \end{proposition}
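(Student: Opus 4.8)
The plan is to show directly that the set $\mathcal{O}(T,b)$ satisfies the definition of an orientation, i.e.\ that for each edge $e$ of $T$ it contains exactly one of $e$ and $\bar{e}$. Since membership of $e$ in $\mathcal{O}(T,b)$ is defined by the occurrence of $e$ in some reduced path $p_v$, the whole statement reduces to two claims about an arbitrary edge $e$ of $T$: (existence) at least one of $e,\bar{e}$ occurs in some $p_v$; and (uniqueness) not both of them do. Throughout I would lean on the fact that, because $T$ is a tree, any two vertices are joined by a \emph{unique} reduced path, so that in particular every prefix of a $p_v$ is forced to coincide with one of the distinguished paths.

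The technical heart is the observation that $e$ occurs in some $p_v$ if and only if $p_{\tau(e)} = p_{\iota(e)}.e$. For the forward direction, suppose $e$ occurs in $p_v$ and truncate $p_v$ at an occurrence of $e$; the resulting initial segment is a reduced path (prefixes of reduced paths are reduced) from $b$ to $\tau(e)$, so by uniqueness it equals $p_{\tau(e)}$, and its penultimate vertex is $\iota(e)$, so deleting the last edge leaves a reduced path from $b$ to $\iota(e)$, which must be $p_{\iota(e)}$. The reverse direction is immediate, since $e$ is then literally the last edge of $p_{\tau(e)}$. An equivalent repackaging I would record is that $e$ occurs in some $p_v$ precisely when $p_{\iota(e)}$ does not end in $\bar{e}$, because $p_{\iota(e)}.e$ is reduced exactly in that case and, being reduced, must equal $p_{\tau(e)}$.

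With this in hand, existence and uniqueness fall out together. Writing $u=\iota(e)$ and $w=\tau(e)$, I would split on whether the reduced path $p_u$ ends in $\bar{e}$. If it does not, then $p_u.e$ is reduced and runs from $b$ to $w$, so $p_w = p_u.e$; thus $e$ occurs (in $p_w$), while $\bar{e}$ does not, since $p_w$ now ends in $e$. If instead $p_u$ does end in $\bar{e}$, then deleting that final edge exhibits $p_u = p_w.\bar{e}$ by the same uniqueness argument, so $\bar{e}$ occurs while $e$ does not. These two cases are mutually exclusive and exhaustive, so exactly one of $e,\bar{e}$ lies in $\mathcal{O}(T,b)$, which is what we want.

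The main obstacle is purely the bookkeeping of reduced paths: every step hinges on the uniqueness of reduced paths in a tree, and the one point to treat with care is justifying that a segment truncated at an edge is itself reduced and begins and ends at the expected vertices, so that uniqueness may legitimately be invoked. Geometrically the result simply says that $\mathcal{O}(T,b)$ is the orientation of $T$ pointing away from the root $b$; if a slicker phrasing is preferred, one could argue via the depth function $v \mapsto$ (length of $p_v$), observing that it changes by exactly $\pm 1$ across each edge of $T$ and that $e \in \mathcal{O}(T,b)$ iff $e$ increases depth.
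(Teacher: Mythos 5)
Your proof is correct. The paper itself gives no argument for this proposition (it declares at the start of Section 2 that proofs there are sketched or omitted, and offers only the geometric gloss that $\mathcal{O}(T,b)$ is the orientation ``pointing away from $b$''), so there is nothing to compare against; your argument supplies the standard justification. The key reduction --- that $e$ occurs in some $p_v$ if and only if $p_{\tau(e)} = p_{\iota(e)}.e$, equivalently if and only if $p_{\iota(e)}$ does not end in $\bar{e}$ --- is exactly the right lemma, it is proved correctly via uniqueness of reduced paths in the tree $T$, and the two-case analysis on whether $p_{\iota(e)}$ ends in $\bar{e}$ cleanly yields both existence and exclusivity at once; your closing remark about the depth function is precisely the formal counterpart of the paper's ``arrows away from $b$'' picture.
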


Geometrically, this is the orientation one obtains by drawing arrows on edges `pointing away from $b$.' The main use of maximal trees and orientations will be to give a basis for $\pi_1(G,b)$. The following theorem is key to the rest of the paper, so we will give it a name:

\begin{btheorem} \label{p:basis}
Let $T$ be a maximal tree in a connected graph $G$  with chosen base point $b$. Let $\{e_1, \ldots, e_n\}$ be an ordered orientation of $G \ssm T$.  Let $$l_i=p_{\iota(e_i)}e_i\overline{p_{\tau(e_i)}}.$$ $\pi_1(G,b)$ is freely generated by $l_1,\ldots,l_n$. Given any loop $l$ based at $b$, we may write $[l]$ as a product of the generators as follows: remove the edges of $l$ contained in $T$ to obtain a sequence $e_{i_1}^{\epsilon_1},\ldots, e_{i_k}^{\epsilon_k}$, where $i_j \in \{1,\ldots,n\}$ and $\epsilon_j \in \{1,-1\}$. Then $$[l]=[l_{i_1}^{\epsilon_1}\cdots l_{i_k}^{\epsilon_k}].$$
\end{btheorem} 
  
Thus, once we have a maximal tree and an ordered orientation of the edges outside of this tree, the Basis Theorem gives us a method for constructing an ordered free generating set of $\pi_1(G,b)$. It also tells us how to write any element of $\pi_1(G,b)$ as a product of these generators. We may determine when a subgraph of $G$ is a maximal tree as follows:

\begin{lemma} \label{tree lemma}
Let $G$ be a connected graph, T a subgraph of $G$ and $b$ a vertex of $G$.  Then T is a maximal tree if and only if:

\begin{enumerate}
\item T contains $2(|\text{\emph{V}}G| - 1)$ edges.
\item For each vertex $v$ of $G$ there exists a reduced path $p_v$ from $b$ to $v$ in $T$.
\end{enumerate}
\end{lemma}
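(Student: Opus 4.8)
The plan is to reduce both implications to a single combinatorial fact about trees: a finite tree $T$ with $|\mathrm{V}T| = k$ has exactly $2(k-1)$ edges. I would prove this by induction on $k$, the base case $k=1$ being immediate. For the inductive step I first extract a \emph{leaf}. Since $T$ is a finite tree it contains no reduced loop, so reduced paths have length bounded by $|\mathrm{E}T|$ and I may choose a reduced path $p = e_1 \cdots e_m$ in $T$ of maximal length. Its terminal vertex $v = \tau(p)$ can have no incident edge other than $\bar{e}_m$: any edge $f$ with $\iota(f) = v$ and $f \neq \bar{e}_m$ would make $p.f$ a reduced path of greater length. Hence $v$ has a single incident geometric edge $\{e_m, \bar{e}_m\}$. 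Deleting $v$ together with this pair leaves a connected subgraph (a path running through $v$ can only enter by $e_m$ and leave by $\bar{e}_m$, so it backtracks and can be shortened to avoid $v$), which is again a tree on $k-1$ vertices. By induction it has $2(k-2)$ edges, so $T$ has $2(k-1)$.

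Granting this, the forward implication is quick. If $T$ is a maximal tree then $\mathrm{V}T = \mathrm{V}G$ by definition, so the edge count gives condition (1). Condition (2) holds because $T$ is connected: for each vertex $v$ there is a path from $b$ to $v$ in $T$, and replacing it by the reduced path in its $\sim$-class (which again lies in $T$) provides the required $p_v$.

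For the converse, suppose (1) and (2) hold. Condition (2) shows that every vertex of $G$ is joined to $b$ by a path lying inside $T$, so $\mathrm{V}T = \mathrm{V}G$ and $T$ is connected. Hence $T$ possesses a maximal tree $T'$, which is spanning: $\mathrm{V}T' = \mathrm{V}T = \mathrm{V}G$. By the edge-counting fact $T'$ has $2(|\mathrm{V}G| - 1)$ edges, which by (1) is exactly the number of edges of $T$; since $\mathrm{E}T' \subseteq \mathrm{E}T$ and both sets are finite of equal size, $\mathrm{E}T' = \mathrm{E}T$, so $T' = T$. Therefore $T$ is itself a tree, and being spanning it is a maximal tree. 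Alternatively, one could apply the Basis Theorem to $T$ with maximal subtree $T'$: the edge count forces $T \ssm T'$ to contain no edges, so $\pi_1(T,b)$ is trivial and $T$ is a tree.

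The only genuine content is the edge-counting lemma, and within it the extraction of a leaf from a finite tree; the maximality-of-length argument is what makes the terminal vertex have valence one, and once this is in hand the induction is routine. Everything else is bookkeeping with the definitions of connectedness and reduced paths together with the already-established existence of maximal trees in a connected graph, so this is the step I expect to require the most care.
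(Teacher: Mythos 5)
Your proof is correct, but there is nothing in the paper to compare it against: the paper omits the proof of this lemma entirely (Section 2 explicitly warns that proofs there ``will either be sketched or omitted''), so your argument fills a genuine gap rather than paralleling or diverging from a printed one. Your route is the standard one and it is complete: everything reduces to the fact that a finite tree on $k$ vertices has $2(k-1)$ edges (counting both $e$ and $\bar{e}$, as in the paper's Serre-style conventions), proved by extracting a leaf as the endpoint of a maximal-length reduced path; the forward implication is then the edge count plus reducing a connecting path inside $T$, and the converse compares $T$ with a spanning maximal subtree $T' \subseteq T$ and uses the edge count to force $\mathrm{E}T' = \mathrm{E}T$, so that $T$ is itself a tree. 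Two small points are worth making explicit if you write this up: your bound on the length of reduced paths by $|\mathrm{E}T|$ presupposes that $T$ has finitely many edges, which in the forward direction requires the (easy) observation that a tree on finitely many vertices has no loop edges and no parallel geometric edges, hence finitely many edges in total; and in the converse, the degenerate case $v = b$ of condition (2) is what guarantees $b \in \mathrm{V}T$, which your claim $\mathrm{V}T = \mathrm{V}G$ quietly uses. Both are routine in the paper's setting, where all graphs under consideration are finite.
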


\subsection{Folding maps of graphs}

From now on we shall assume that all graphs are connected. A \emph{map of graphs} $f:G \rightarrow \Delta$ is a map that takes edges to edges, vertices to vertices and satisfies $f(\bar{e})=\overline{f(e)}$ and $f(\iota(e))=\iota(f(e))$ for every edge in $G$.  For a vertex $v$ of $G$ the map $f$ induces a group homomorphism $f_*:\pi_1(G,v) \rightarrow \pi_1(\Delta,f(v))$.  If $f_*$ is an isomorphism for some (equivalently, any) choice of vertex of $G$, we say that $f$ is a \emph{homotopy equivalence}. If $f$ is bijective on E$G$ and V$G$ then we say $f$ is a \emph{graph isomorphism}. The \emph{star} of a vertex $v$ is defined to be \[ St(v, G)=\{e \in \text{E}G : \iota(e)=v \}. \]

If $f$ is a map of graphs then for each vertex $v$ in $G$ we obtain a map $f_v:St(v,G) \rightarrow St(f(v),\Delta)$ by restricting $f$ to the edges in $St(v,G)$.  We say that $f$ is an \emph{immersion} if $f_v$ is injective for each vertex of $G$, and we say that $f$ is a \emph{covering} if $f_v$ is bijective for each vertex of $G$.  If for some vertex $v$ the map $f_v$ is not injective, Stallings \cite{MR695906} introduced a method called \emph{folding} for improving the map $f$: take edges $e_1$ and $e_2$ in $St(v,G)$ such that $f_v(e_1)=f_v(e_2)$ and form a quotient graph $G '$ by identifying the pairs $\{e_1, e_2\}$,  $\{\bar{e}_1,\bar{e}_2\}$ and $\{\tau(e_1),\tau(e_2)\}$ in $G$ to form quotient edges $e'$, $\bar{e}'$ and a quotient vertex $v'$.

There are then induced maps $q:G \rightarrow G'$ and $f':G' \rightarrow \Delta$ such that $f' \cdot q = f$.  We call this process a \emph{folding} of $G$. If $v$ is a vertex in $G$ the map $q_*:\pi_1(G,v) \rightarrow \pi_1(G',q(v))$ is surjective and $f_*(\pi_1(G,v))=f_*'(\pi_1(G',q(v)))$.

\begin{stheorem}[\cite{MR695906}] 
Let $f:G \rightarrow \Delta$ be a map of graphs, and suppose that $G$ is finite and connected. 
\begin{enumerate}
\item If $f$ is an immersion then $f_*$ is injective.
\item If $f$ is not an immersion, there exists a finite sequence of foldings $G=G_0 \rightarrow G_1 \rightarrow G_2 \ldots \rightarrow G_n$ and an immersion $G_n \rightarrow \Delta$ such that the composition of the above maps is equal to $f$.
\end{enumerate}
\end{stheorem}

\begin{proof}[Sketch proof]
If $f$ is an immersion, then reduced paths are sent to reduced paths of the same length. Hence $f_*$ is injective. For the second part, we iterate the folding described above to obtain a sequence of graphs with the required properties. This process must eventually end as $G$ is finite, and folding reduces the number of edges in a graph.
\end{proof}

There are four different types of fold that can occur, which we illustrate in Figure~\ref{fig:FoldingDiagram}. If $f_*$ is injective only folds of type 1 or 2 occur.  In case 3 the loop $e_1,\bar{e}_2$ is non-trivial in the original graph, but mapped to the trivial element in the quotient, and in 4 the loops $e_1$ and $e_2$ are distinct but mapped to homotopic loops in the quotient. 

\begin{figure} [ht]
\centering
\includegraphics{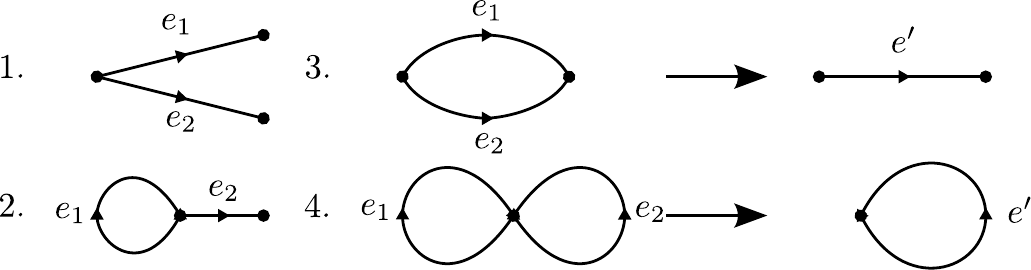}
\caption{Possible folds of a graph}
\label{fig:FoldingDiagram}
\end{figure}
\subsection{Branded graphs and their associated automorphisms}

We may identify $F_n$ with the fundamental group of a fixed graph, R$_n$:

\begin{definition}
The \emph{rose with n petals}, R$_n$ is defined be the graph with edge set $\text{ER}_n=\{x_1,\ldots,x_n\}\cup\{\bar{x}_1,\ldots,\bar{x}_n\}$, a single vertex $b_R$ with $\iota(e)=\tau(e)=b_R$ for each edge $e$ in $\text{ER}_n$ and $inv$ taking $x_i \rightarrow \bar{x}_i$.  We identify $F_n$ with $\pi_1(\text{R}_n,b_R)$ by the map taking each generator $x_i$ of $F_n$ to the path consisting of the single edge with the same name.
\end{definition}

Suppose that $f:G \to R_n$ is a homotopy equivalence. Let $T$ be a maximal tree of $G$, let $b$ be a vertex of $G$, and let $\{e_1,\ldots,e_n\}$ be an orientation and an ordering of the elements of $G \ssm T$. We call the tuple $\mathcal{G}=(G,f,b,\{e_1,\ldots, e_n\})$ a \emph{branded graph}. If we are given $G$ and $f$, then we say that a choice of a base point $b$ and an ordered orientation of a complement of a maximal tree in $G$ is a \emph{branding}. As $b$ and $\{e_1,\ldots,e_n\}$ determine a choice of basis of $\pi_1(G,b)$, every branded graph has an \emph{associated automorphism} of $F_n$ defined by: $$\phi_\mathcal{G}(x_i)=f_*(l_i),$$

where $l_i$ is the loop $p_{\iota(e_i)}.e_i.\overline{p_{\tau(e_i)}}$ described in Proposition \ref{p:basis}. Topologically, the choice of basepoint $b$ and edges $\{e_1,\ldots,e_n\}$ determines a homotopy equivalence $(R_n,b_{R}) \xrightarrow{h_{\mathcal{G}}} (G,b)$ given by mapping $x_i$ over $l_i$. Then $\phi_\mathcal{G}$ is the automorphism $f_*{h_{\mathcal{G}}}_*$:
\[ \xymatrix{
 &\pi_1(G,b) \ar[d]^{f_*}\\
\pi_1(R_n,b_{R}) \ar[ru]^{{h_\mathcal{G}}_*} \ar[r]^{\phi_\mathcal{G}} & \pi_1(R_n,b_{R}) }\]
\begin{example}\label{e:auto}
If $\phi \in \aut(F_n)$ and $\phi(x_i)=w_i$ for all $i$, let $G$ be the graph that is topologically a rose, with the $i$th loop subdivided into $|w_i|$ edges. Let $f:G\to R_n$ be the homotopy equivalence given by mapping the $i$th loop to the path given by $w_i$ in $R_n$. Let $b$ be the vertex in the centre of the rose, and for each $i$ choose an edge $e_i$ in the $i$th loop oriented in the direction of the word $w_i$. If $\mathcal{G}=(G,f,b,\{e_1,\ldots,e_n\})$ then $l_i$ i the $i$th loop, hence $\phi_\mathcal{G}=\phi$.
\end{example}

Of particular importance is the situation when $f$ is an immersion:

\begin{lemma} \label{immersion lemma}
Let $f:G\to R_n$ be a homotopy equivalence and an immersion. Then $f$ is an isomorphism, and for any branding $\mathcal{G}$ associated to $G$, $f$, we have $\phi_\mathcal{G} \in W_n$.
\end{lemma}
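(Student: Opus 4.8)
The plan is to prove the two assertions in turn: first that the immersion $f$ must be a graph isomorphism, and then that this rigidity forces every associated automorphism to permute and invert the basis. The heart of the argument is the first claim; once $G$ is known to be a one-vertex rose, the second is essentially bookkeeping.

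To show that $f$ is an isomorphism I would exploit the two hypotheses simultaneously. Because $f$ is an immersion, the sketch proof of the Folding Theorem tells us that $f$ sends reduced paths to reduced paths of the same length; because $f$ is a homotopy equivalence, $f_*$ is an isomorphism onto $F_n$, so $f_*(\pi_1(G,v))=\pi_1(R_n,b_R)=F_n$ for every vertex $v$. Fix a vertex $v$ and an edge $y \in St(b_R,R_n)=\text{E}R_n$. Since $[y] \in F_n = f_*(\pi_1(G,v))$, there is a reduced loop $\gamma$ at $v$ with $f_*([\gamma])=[y]$. As $y$ is already reduced (a single-edge loop in $R_n$) and $f(\gamma)$ is reduced, we get $f(\gamma)=y$; matching lengths forces $\gamma$ to be a single edge, i.e. a loop at $v$ lying over $y$. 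Letting $y$ range over the $2n$ edges of $St(b_R,R_n)$ produces $2n$ distinct edges of $St(v,G)$, each a loop at $v$, distinct because $f_v$ is injective and their images differ. Since injectivity of $f_v$ also gives $|St(v,G)| \le |St(b_R,R_n)| = 2n$, these exhaust $St(v,G)$, so every edge incident to $v$ is a loop at $v$. By connectedness $G$ then has a single vertex, so it is a rose with $n$ petals on which $f$ restricts to a bijection of edges; hence $f$ is a graph isomorphism.

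For the second assertion, note that since $G$ has one vertex, Lemma~\ref{tree lemma} forces any maximal tree $T$ to be that single vertex with no edges, so the base point is forced and a branding of $G$ is precisely an ordered orientation $\{e_1,\ldots,e_n\}$ of the $n$ petals: each $e_i$ is one of the two edges over some $x_{\sigma(i)}^{\pm1}$, where $\sigma$ is a permutation of $\{1,\ldots,n\}$ because each geometric edge occurs exactly once. Because the tree paths $p_v$ are all trivial, the basis loops of the Basis Theorem reduce to $l_i = e_i$, and applying $f_*$ gives $\phi_\mathcal{G}(x_i)=f_*([e_i])=x_{\sigma(i)}^{\epsilon_i}$ for suitable signs $\epsilon_i \in \{-1,1\}$. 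This is exactly the form of an element of $W_n$.

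I expect the only real obstacle to be the first assertion, and within it the step that upgrades the local injectivity of the maps $f_v$ to the global conclusion that $G$ has a single vertex. The loop-lifting observation above is what makes this work, and it relies essentially on $R_n$ having one vertex, so that every generator of $F_n$ is represented by a length-one loop that any reduced lift must match on the nose. The remaining steps are routine once this is in place.
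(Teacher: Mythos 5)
Your proof is correct, but it takes a genuinely different route to the key claim (immersion plus homotopy equivalence implies isomorphism) than the paper does. The paper cites Stallings' completion theorem: an immersion $f:G \to R_n$ extends to a covering $f':G' \to R_n$ for some graph $G'$ containing $G$, and surjectivity of $f_*$ then forces this covering to have degree one, whence $G = G' \cong R_n$. You instead argue directly: at each vertex $v$, surjectivity of $f_*$ together with the fact that an immersion carries reduced paths to reduced paths of equal length lets you lift each of the $2n$ single-edge loops of $R_n$ to a single-edge loop at $v$; injectivity of $f_v$ shows these $2n$ loops exhaust $St(v,G)$, and connectedness then forces $G$ to have a single vertex, so $f$ is bijective on edges and vertices. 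Your argument is more elementary and self-contained: it uses only the definitions and the length-preservation property already noted in the paper's sketch of the Folding Theorem, and it avoids both the completion-to-covering machinery and the covering-space fact that a $\pi_1$-surjective covering has degree one. What the paper's approach buys in exchange is brevity and reliance on a standard, reusable statement from Stallings' paper. Your handling of the second assertion --- trivial maximal tree, $l_i = e_i$, hence $\phi_\mathcal{G}(x_i) = x_{\sigma(i)}^{\epsilon_i}$ with $\sigma$ a permutation --- is essentially identical to the paper's.
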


\begin{proof}
If $f$ is an isomorphism of graphs, then $\phi_\mathcal{G} \in W_n$ for any branding -- each $e_i$ forms a loop in $G$, so there exists $\sigma \in S_n$ such that each $e_i$ is sent to $x_{\sigma(i)}^{\epsilon_i}$ for some $\epsilon_i \in \{-1,1\}$ that depends on $i$. It remains to show that if $f$ is an immersion and a homotopy equivalence then $f$ is an isomorphism. One way to see this is as follows: if $f$ is an immersion, there exists a graph $G'$ containing $G$ and a map $f':G' \to R_n$ which covers $R_n$ (e.g. \cite{MR695906}, Theorem 6.1). However, $f_*$ is surjective, so this cover is degree 1, and $G'=G\cong R_n$.
\end{proof}

\section{The algorithm}\label{s:algorithm}

The algorithm for writing an arbitrary element of $\phi \in \aut(F_n)$ as a product of Whitehead automorphisms proceeds as follows. One first picks a branded graph $\G$ such that $\phi_{\mathcal{G}}=\phi$; to be definite we take the one described in Example~\ref{e:auto}. If $f$ is not an immersion then a fold occurs and, since $f$ is a homotopy equivalence, it can only be one of the two types shown in Figure \ref{FoldingDiagram2}.

\begin{figure}[ht]
 \centering
\includegraphics{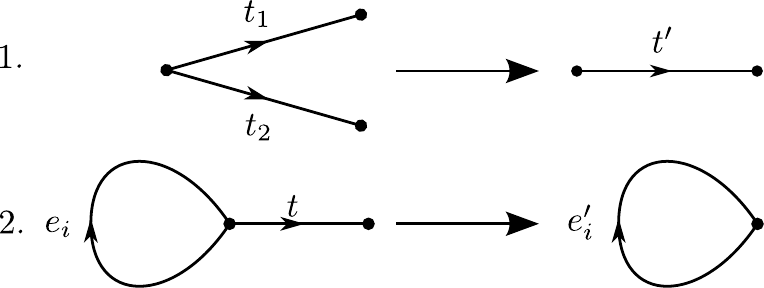}
\caption{Possible folds when $f$ is a homotopy equivalence}
\label{FoldingDiagram2}
\end{figure}

If, as the labelling in Figure~\ref{FoldingDiagram2} suggests, the folding edges with two distinct endpoints are in the maximal tree T, then we obtain a branding $\mathcal{G}'$ of the folded graph. In the first case, the associated automorphisms $\phi_\G$ and $\phi_{\G'}$ are identical (Proposition \ref{type1fold}), and in the second, they differ by a Whitehead automorphism of the form $(A,a)$ that may be read off from the structure of $T$ (Proposition \ref{type2fold}).

It may happen that one of $t_1,t_2$, or $t$ does not lie in $T$. In this case we can swap this edge with an edge already lying in $T$ (see Section \ref{ss:tree}), to obtain a new tree $T'$ and a new branding $\mathcal{G}'$. Again, $\phi_\mathcal{G}$ and $\phi_{\mathcal{G}'}$ differ by a Whitehead automorphism of the form $(A,a)$ that may be read off from the swap (Proposition \ref{tree sub}). After at most two such swaps, we can ensure that the folding edges with two distinct endpoints lie in $T$, and proceed as above (Remark \ref{foldingremark}).

By Stallings' folding theorem, we obtain a finite sequence $\G =\G_1,\G_2, \ldots, \G_k$ of branded graphs $\G_j=(G_j,f_j, b_j, \{e_1^j,\ldots,e_n^j\})$ such that each $f_j$ is a homotopy equivalence, and $f_k$ is an immersion. By Lemma \ref{immersion lemma} we know that $f_k$ is an isomorphism and $\phi_{\G_k} \in W_n$. Then:
$$\phi=\phi_{\mathcal{G}_1}=\phi_{\G_k}(\phi_{\G_{k}}^{-1}\phi_{\G_{k-1}})\cdots(\phi_{\G_3}^{-1}\phi_{\G_2})(\phi_{\G_2}^{-1}\phi_{\G_1})$$
is a decomposition of $\phi$ as a product of Whitehead automorphisms. Throughout this paper we shall assume that $\aut(F_n)$ acts on $F_n$ on the left, so that in the above decomposition we apply $\phi_{\G_2}^{-1}\phi_{\G_1}$ first, then $\phi_{\G_3}^{-1}\phi_{\G_2}$, etc.

If we count one step as a (possibly trivial) tree substitution, followed by a fold, then each step reduces the number of combinatorial edges of the graph by two (an $e$ and an $\bar{e}$). If the initial graph has $2m$ edges, then as $R_n$ has $2n$ edges we will obtain a decomposition of $\phi$ after $m-n$ steps. If $\phi(x_i)=w_i$ and we start with the graph given in Example~\ref{e:auto}, then our algorithm will terminate after $(\sum_{i=1}^n|w_i|)-n$ steps.

It remains to give a detailed description of the process of folding and exchanging edges in maximal trees.

\subsection{Folding edges contained in T} \label{ss:fold}

Suppose $q:G \to G'$ is a fold from Figure  \ref{FoldingDiagram2}. The map $f$ factors through $q$, inducing a homotopy equivalence $f':G' \to R_n$ such that $f=f' \cdot q$. Let $b',e_1',\ldots,e_n'$ be the images of $b,e_1,\ldots,e_n$ respectively under $q$. Then $\mathcal{G}'=(G',f',b',\{e_1',\ldots,e_n'\})$ is a branding of $G'$. The only thing to check is that $T'=G'\ssm\{e_1',\overline{e'_1},e_2',\overline{e_2'},\ldots,e_n,\overline{e_n'}\}$ is a maximal tree of $G'$. The subgraph $T'$ contains $2(|VG'|-1)$ edges as a fold of type 1 or 2 reduces the number of vertices in a graph by one, and the number of edges in a graph by two. Let $v'$ be a vertex of $G'$. Take a vertex $v$ of $G$ such that $q(v)=v'$. In the case of a type 1 fold, the path $[q(p_v)]$ is a reduced path from $b'$ to $v'$ lying in $T'$, and in the case of a type 2 fold, if we remove all occurrences of $e_i'$ from $q(p_v)$, then reduce, we obtain a path from $b'$ to $v'$ lying in $T'$. Hence by Lemma \ref{tree lemma}, we know that $T'$ is 
a maximal tree of $G'$.
Let $p_v$ be the unique reduced path from $b$ to $v$ in $T$ and let $l_1,\ldots,l_n$ be the generators of $\pi_1(G,b)$ given by $b$ and $\{e_1,\ldots,e_n\}$. Let $l_1',\ldots,l_n'$ be the generators of $\pi_1(G',b')$ given by $b'$ and $\{e_1', \ldots,e_n'\}$. As $f_*=f'_*q_*$, we may find the difference between the automorphisms $\phi_\mathcal{G}$ and $\phi_\mathcal{G'}$ by finding a decomposition of $q_*(l_i)$ in terms of the $l_i'$. 

\begin{proposition} \label{type1fold}
Suppose that $q$ is a fold of type 1, where the folded edges $t_1$ and $t_2$ lie in $T$. Then $\phi_\mathcal{G}=\phi_{\mathcal{G}'}$.
\end{proposition}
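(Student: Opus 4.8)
The plan is to follow the reduction already set up in the paragraph before the statement: since $f_* = f'_* q_*$, it suffices to show that $q_*(l_i) = l_i'$ for each $i$, for then $\phi_{\mathcal{G}}(x_i) = f_*(l_i) = f'_*(q_*(l_i)) = f'_*(l_i') = \phi_{\mathcal{G}'}(x_i)$ for every $i$, and the two automorphisms agree. Because the folded edges $t_1, t_2$ lie in $T$ and none of the $e_i$ is folded, the non-tree edges are merely relabelled, $q(e_i) = e_i'$, so the whole question reduces to understanding how $q$ acts on the tree geodesics $p_v$. Concretely, I would aim to prove the single clean statement that $[q(p_v)] = p'_{q(v)}$ for every vertex $v$ of $G$, where $p'$ denotes the reduced tree path in $T'$ — that is, that $q$ carries tree geodesics to tree geodesics.

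The heart of the argument is a reducedness check, and this is where the type 1 hypothesis does its work. In a type 1 fold both $t_1$ and $t_2$ are oriented away from $b$ in the orientation $\mathcal{O}(T,b)$ of Proposition \ref{p:tree orientation}; equivalently, writing $u = \iota(t_1) = \iota(t_2)$, the two distinct endpoints $\tau(t_1)$ and $\tau(t_2)$ are both children of $u$. I would use that every edge of a reduced tree path $p_v$ lies in $\mathcal{O}(T,b)$, so $p_v$ contains no occurrence of $\bar{t}_1$ or $\bar{t}_2$ and — since a geodesic in a tree visits $u$ at most once — contains at most one of $t_1, t_2$, and that at most once. Applying $q$ identifies $t_1$ with $t_2$ but leaves every other edge of $p_v$ untouched; inspecting the two edges adjacent to a possible occurrence of $t_1$ (or $t_2$), both of which again point away from $b$ and are therefore not inverses of the folded edge, shows that $q(p_v)$ is still reduced. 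As $q(p_v)$ is then a reduced path in $T'$ from $b'$ to $q(v)$, and $T'$ is a maximal tree (this is checked in the setup via Lemma \ref{tree lemma}), uniqueness of reduced tree paths forces $[q(p_v)] = p'_{q(v)}$.

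With this in hand the computation closes immediately: since $l_i = p_{\iota(e_i)}\, e_i\, \overline{p_{\tau(e_i)}}$ is reduced and $q$ fixes the labels $e_i \mapsto e_i'$,
\[ q_*(l_i) = \bigl[\, q(p_{\iota(e_i)})\, e_i'\, \overline{q(p_{\tau(e_i)})}\, \bigr] = \bigl[\, p'_{\iota(e_i')}\, e_i'\, \overline{p'_{\tau(e_i')}}\, \bigr] = l_i', \]
and the reduction above yields $\phi_{\mathcal{G}} = \phi_{\mathcal{G}'}$. I expect the only real obstacle to be this reducedness verification for $q(p_v)$: one must check that the vertex identification $\tau(t_1) \sim \tau(t_2)$ does not secretly create a backtrack, and it is precisely the ``both folded edges point away from $b$'' feature of a type 1 fold that rules this out. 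In a type 2 fold one of the adjacent edges \emph{can} be the inverse of the folded edge, which is exactly the mechanism producing the nontrivial Whitehead automorphism in Proposition \ref{type2fold}.
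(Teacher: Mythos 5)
Your reduction to showing $q_*(l_i)=l_i'$ is exactly right, but the step you yourself call the heart of the argument rests on a false claim. It is not true that in a type 1 fold both $t_1$ and $t_2$ lie in $\mathcal{O}(T,b)$: writing $u=\iota(t_1)=\iota(t_2)$, all that is forced is that at most one of $\bar{t}_1,\bar{t}_2$ can be the last edge of the geodesic $p_u$, so one of the two folded edges may well point \emph{toward} the base point. Concretely, take $G$ with vertices $b,u,w_1,w_2$, tree edges $s:b\to w_1$, $t_1:u\to w_1$, $t_2:u\to w_2$, non-tree edges $e_1:w_2\to b$ and a loop $e_2$ at $u$, and $f$ sending $s\mapsto x_1$ and $t_1,t_2,e_1,e_2\mapsto x_2$. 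Then $f_*$ sends the basis $\{l_1,l_2\}$ to $\{x_1x_2,\ x_1x_2x_1^{-1}\}$, so $f$ is a homotopy equivalence; folding $t_1$ with $t_2$ is a type 1 fold with both edges in $T$; yet $\mathcal{O}(T,b)=\{s,\bar{t}_1,t_2\}$, and $p_{w_2}=s\,\bar{t}_1\,t_2$ maps to $q(p_{w_2})=s\,\bar{t}'\,t'$, which is \emph{not} reduced. So the identification $\tau(t_1)\sim\tau(t_2)$ can create a backtrack in a type 1 fold just as in a type 2 fold, and your reducedness verification, together with the appeal to uniqueness of reduced paths, collapses.

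The good news is that the lemma you were aiming at, $[q(p_v)]=p'_{q(v)}$, is true and needs none of this: $q$ sends tree edges to tree edges, so $q(p_v)$ is a path in $T'$ from $b'$ to $q(v)$, and in a tree \emph{every} path between two vertices reduces to the unique reduced path between them, whether or not the path you started with was reduced. With that one-line substitution your argument goes through. The paper's own proof is even more direct and bypasses geodesics entirely: the loop $q(l_i)$ crosses exactly one edge outside $T'$, namely $e_i'$, once and positively, so the Basis Theorem gives $q_*(l_i)=l_i'$ immediately. Your closing diagnosis of the contrast with type 2 should be revised accordingly: backtracks occur in both types and are harmless; what makes type 2 produce a nontrivial $(A,a)$ is that there one of the two folded edges is the non-tree loop $e_i$, so after folding the tree paths that crossed $t$ now cross the non-tree edge $e_i'$, and the Basis Theorem picks up extra factors of $l_i'$.
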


\begin{proof} For each path $l_i$, the only edge $q(l_i)$ crosses that does not lie in $T'$ is $e_i'$. By the Basis Theorem, we have $q_*(l_i)=l_i'$. Hence \begin{equation*}\phi_{\mathcal{G}'}(x_i)=f_*'(l_i')=f_*'(q_*(l_i))=f_*(l_i)=\phi_{\mathcal{G}}(x_i). \qedhere \end{equation*}\end{proof}

\begin{proposition} \label{type2fold}
Let $q$ be a fold of type 2, where we identify an edge $t$ in $T$ with the edge $e_i$ (and identify $\bar{t}$ with $\bar{e}_i$). Let $\mathcal{O}(T,b)$ be the orientation of $T$ given by Proposition \ref{p:tree orientation}. Let $$\epsilon=\begin{cases} 1 &\text{if $t \in \mathcal{O}(T,b)$} \\ -1 &\text{if $\bar{t} \in \mathcal{O}(T,b)$.}\end{cases} $$ Define $A \subset X \cup X^{-1}$ such that $x_i^\epsilon \in A$, $x_i^{-\epsilon} \not \in A$ and \begin{align*} x_j \in A &\Leftrightarrow \text{$p_{\iota(e_i)}$ crosses $t$ or $\bar{t}$} \\ x_j^{-1} \in A &\Leftrightarrow \text{$p_{\tau(e_i)}$ crosses $t$ or $\bar{t}$.}\end{align*} Then $\phi_\mathcal{G}=\phi_{\mathcal{G}'}\cdot (A,x_i^{\epsilon})$.
\end{proposition}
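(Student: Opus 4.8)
The plan is to exploit the factorisation $f_*=f'_*q_*$ recorded just before the statement: since $\phi_{\mathcal{G}}(x_j)=f_*(l_j)=f'_*(q_*(l_j))$ and $\phi_{\mathcal{G}'}(x_j)=f'_*(l_j')$, it suffices to express each $q_*(l_j)$ as a word in the new generators $l_1',\ldots,l_n'$ and then to compare that word with $(A,x_i^{\epsilon})(x_j)$. To compute $q_*(l_j)$ I would apply the Basis Theorem to the loop $q(l_j)$ in $(G',b')$: read off, in order and with signs, the non-tree edges $e_k'^{\pm1}$ that $q(l_j)$ crosses. The key structural observation is that the only tree edge of $T$ whose image fails to lie in $T'$ is $t$, whose image is the non-tree edge $e_i'$; every other edge of $T$ maps into $T'$. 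Hence the non-tree edges met by $q(l_j)$ are $e_j'$ (crossed once, coming from the middle edge $e_j$) together with at most one occurrence of $e_i'$ arising from each of the tree segments $p_{\iota(e_j)}$ and $\overline{p_{\tau(e_j)}}$, according to whether these reduced tree paths cross $t$ (a reduced path in $T$ crosses each edge at most once).

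Next I would pin down signs using the orientation $\mathcal{O}(T,b)$. Each $p_v$ traverses tree edges only in their $\mathcal{O}(T,b)$-direction, so a crossing of $t$ by the forward segment $p_{\iota(e_j)}$ becomes a crossing of $e_i'$ of sign $+\epsilon$, while the corresponding crossing in the reversed segment $\overline{p_{\tau(e_j)}}$ has sign $-\epsilon$ (when $\epsilon=1$, $q(t)=e_i'$ is traversed forwards out of the $\iota$-segment and backwards in the $\tau$-segment; when $\epsilon=-1$ the roles of $t$ and $\bar t$ swap). For $j\neq i$ the Basis Theorem then gives
\[ q_*(l_j)=l_i'^{\,\epsilon\alpha_j}\,l_j'\,l_i'^{\,-\epsilon\beta_j},\qquad \alpha_j=\chi_A(x_j),\ \beta_j=\chi_A(x_j^{-1}), \]
where $\alpha_j,\beta_j\in\{0,1\}$ record whether $p_{\iota(e_j)}$, respectively $p_{\tau(e_j)}$, crosses $t$ --- which are exactly the defining conditions for membership of $x_j,x_j^{-1}$ in $A$. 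Applying $f'_*$ converts this into $\phi_{\mathcal{G}}(x_j)=\phi_{\mathcal{G}'}(x_i)^{\epsilon\alpha_j}\phi_{\mathcal{G}'}(x_j)\phi_{\mathcal{G}'}(x_i)^{-\epsilon\beta_j}=\phi_{\mathcal{G}'}\big((A,x_i^{\epsilon})(x_j)\big)$, as required.

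The step I expect to be the crux is the case $j=i$, where $e_i$ is itself the folded non-tree edge and the sign bookkeeping genuinely threatens to fail. Here $\iota(e_i)=\iota(t)=v$, so the behaviour of $p_v$ is forced by $\epsilon$: when $\epsilon=1$ the vertex $v$ lies on the $b$-side of $t$ and $p_v$ does not cross $t$, whereas when $\epsilon=-1$ the path $p_v$ crosses $\bar t$. Running the same sign analysis, the signed number of $e_i'$-crossings in $q(l_i)$ equals $1$ minus the indicator that $p_{\tau(e_i)}$ crosses $t$ when $\epsilon=1$, and equals that same indicator when $\epsilon=-1$; in either case the exponent of $l_i'$ is a priori $0$ or $1$. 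To exclude the value $0$ I would invoke that $f$ is a homotopy equivalence: as $f_*=f'_*q_*$ is an isomorphism and $q_*$ is surjective, $q_*$ is injective, so $q_*(l_i)\neq 1$. This forces the exponent to be $+1$ in both cases, giving $q_*(l_i)=l_i'$ and hence $\phi_{\mathcal{G}}(x_i)=\phi_{\mathcal{G}'}(x_i)=\phi_{\mathcal{G}'}\big((A,x_i^{\epsilon})(x_i)\big)$, since $(A,x_i^{\epsilon})$ fixes $x_i$. Combining the two cases yields $\phi_{\mathcal{G}}(x_j)=\phi_{\mathcal{G}'}\big((A,x_i^{\epsilon})(x_j)\big)$ for every $j$, and therefore $\phi_{\mathcal{G}}=\phi_{\mathcal{G}'}\cdot(A,x_i^{\epsilon})$ under the left-action convention in which $(A,x_i^{\epsilon})$ is applied first.
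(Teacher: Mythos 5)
Your proposal is correct, and its skeleton --- factoring through $q_*$, reading off the non-tree edges $e_i'^{\pm 1}$ crossed by $q(l_j)$ via the Basis Theorem, and using that the paths $p_v$ traverse tree edges only in their $\mathcal{O}(T,b)$-direction --- is exactly the paper's. The one place where you genuinely diverge is the case $j=i$, which you rightly flag as the crux. The paper dispatches it combinatorially: in a type 2 fold the edge $e_i$ is a loop, so $p_{\iota(e_i)}=p_{\tau(e_i)}$, and the sequence of non-tree edges crossed by $q(l_i)$ is therefore either $(e_i')$ or $(e_i',e_i',\overline{e_i'})$, both of which reduce to $l_i'$ by the Basis Theorem. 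You never invoke the loop structure of $e_i$; instead you leave open the a priori possibility that the exponent of $l_i'$ is $0$ and kill it algebraically, observing that $f_*=f'_*q_*$ is an isomorphism, hence $q_*$ is injective and $q_*(l_i)\neq 1$ (your appeal to surjectivity of $q_*$ here is redundant --- injectivity of $f'_*q_*$ alone forces injectivity of $q_*$). Both arguments are valid. The paper's is more self-contained and keeps the geometry of the fold visible; yours buys some robustness, since it needs only that $t$ is the unique tree edge whose image lies outside $T'$ and that $f$ is a homotopy equivalence, not the local picture of a type 2 fold (indeed, run backwards, your sign analysis recovers the loop constraint: the exponent being $1$ forces $p_{\tau(e_i)}$ to cross $\bar t$ exactly when $\epsilon=-1$). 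You also treat both values of $\epsilon$ in one uniform computation, where the paper works out $t\in\mathcal{O}(T,b)$ and declares the other case ``similar.''
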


\begin{proof} We prove this result for $t \in \mathcal{O}(T,b)$, the other case being similar. If $t \in \mathcal{O}(T,b)$, then $t$ may appear at most once in a path $p_v$, however $\bar{t}$ may not. Note that: \begin{align*} q(l_j)&=q(p_{\iota(e_j)}e_j\overline{p_{\tau(e_j)}}) \\ &=q(p_{\iota(e_j)}).e_j'.q(\overline{p_{\tau(e_j)}}). \end{align*} Removing all the edges of $q(l_j)$ not in $T'$ leaves a sequence of the form $(e_j')$, $(e_i',e_j')$, $(e_i',e_j',\overline{e_i'})$ or $(e_j', \overline{e_i'})$, where $e_i'$ proceeds $e_j'$ if and only if $t$ lies in $p_{\iota(e_j)}$, and $\overline{e_i'}$ follows $e_j'$ if and only if $t$ lies in $p_{\tau(e_j)}$. As $e_i$ is a loop, $p_{\iota(e_i)}=p_{\tau(e_i)}$, and therefore this sequence is either $(e_i')$ or $(e_i',e_i',\overline{e_i'})$. Therefore $q_*(l_i)=l_i'$ and it follows that $\phi_\mathcal{G}(x_i)=\phi_{\mathcal{G}'}(x_i)$. If $j\neq i$ then by the Basis Theorem we have $[q(l_j)]=[l_i']^{\alpha_j}.[l_j'].[l_i']^{-\beta_j}$ where $\alpha_j=\chi_A(x_j)
$ and $\beta_j=\chi_A(x_j^{-1})$. Hence \begin{align*} \phi_{\mathcal{G}'} \cdot (A,x_i)(x_j)&=\phi_{\mathcal{G}'}(x_i^{\alpha_j}x_jx_i^{-\beta_j}) \\ &=f'_*([l_i']^{\alpha_j}.[l_j'].[l_i']^{-\beta_j}) \\ &=f'_*q_*(l_j) \\ &=f_*(l_j) \\ &=\phi_{\mathcal{G}}(x_j) \qedhere \end{align*}\end{proof}

\subsection{Swapping edges into a tree}\label{ss:tree}

Suppose that we would like to fold in a branded graph as in Figure 2, but an edge $t_1$, $t_2$ or $t$ lies outside the maximal tree. Then either this edge or its inverse is equal to $e_i$ for some $i$. The edge $e_i$ has distinct endpoints, so $p_{\iota(e_i)} \neq p_{\tau(e_i)}$. Let $a$ be the shared initial segment of these paths. Either $p_{\iota(e_i)} \ssm a$ or $p_{\tau(e_i)} \ssm a$ is non-empty. Choose an edge $e_i'$ such that either $e_i' \in p_{\iota(e_i)} \ssm a$ or $\overline{e_i'} \in p_{\tau(e_i)} \ssm a$. By a similar approach to the one used in Section \ref{ss:fold} one can check that $T'=G\ssm\{e_1,\overline{e_1},e_2,\overline{e_2},\ldots,e_i',\overline{e_i'},\ldots, e_n,\overline{e_n}\}$ is a maximal tree of $G$, so that $\mathcal{G'}=(G,f,b,\{e_1,\ldots,e_i',\ldots,e_n\})$ is a branding of $G$.

\begin{figure}[ht]
 \centering
\includegraphics{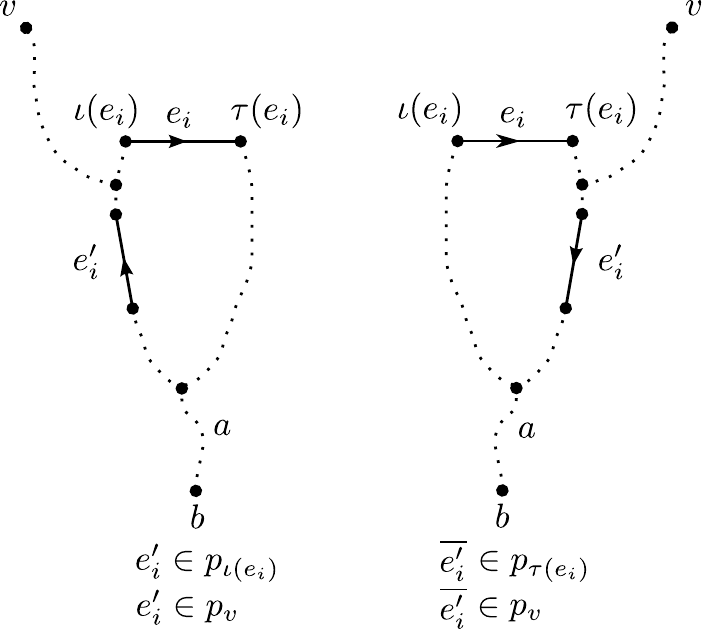}
\caption{Changing maximal trees.}
\label{TreeSubstitution}
\end{figure}

\begin{proposition} \label{tree sub}
Let $\mathcal{G}'$ be the branding obtained by swapping an edge as described above and depicted in Figure 3. Define $$\epsilon=\begin{cases} 1 & \text{if $e_i' \in p_{\iota(e_i)}$} \\ -1 &\text{if $e_i' \in \overline{p_{\tau(e_i)}}.$} \end{cases} $$ Now define $A \subset X \cup X^{-1}$ to be such that $x_i^{\epsilon} \in A$, $x_i^{-\epsilon} \not \in A$ and \begin{align*} x_j \in A &\Leftrightarrow \text{$p_{\iota(e_j)}$ crosses $e_i'$ or $\overline{e_i'}$} \\ x_j^{-1} \in A &\Leftrightarrow \text{$p_{\tau(e_j)}$ crosses $e_i'$ or $\overline{e_i'}$.} \end{align*} Then $\phi_\mathcal{G}=\phi_{\mathcal{G}'} \cdot (A,x_i^{\epsilon}).$
\end{proposition}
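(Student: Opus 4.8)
The plan is to proceed exactly as in the proof of Proposition~\ref{type2fold}, since the tree-substitution is formally very similar to a type-2 fold: in both cases we are changing the status of a single edge (here $e_i'$) from tree-edge to non-tree-edge and vice versa, and we want to compute how the old basis loops $l_j$ decompose in terms of the new basis loops $l_j'$. Because the underlying graph $G$ is unchanged (only the maximal tree and the ordered orientation change), we have $f_* = f_*$ throughout, so the whole computation is combinatorial: I need to express each old generator $l_j = p_{\iota(e_j)} e_j \overline{p_{\tau(e_j)}}$ of $\pi_1(G,b)$ in terms of the new generators $l_1',\ldots,l_n'$ built from the new tree $T'$, and then translate the resulting identity into the statement $\phi_{\mathcal{G}} = \phi_{\mathcal{G}'}\cdot(A,x_i^\epsilon)$.

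First I would treat the exceptional index $j=i$ and show $q_*(l_i) = l_i'$ in the appropriate sense, i.e.\ that the new loop $l_i'$ (based at the edge $e_i'$ in $T'$) is the correct replacement so that $\phi_{\mathcal{G}}(x_i)$ and $\phi_{\mathcal{G}'}(x_i)$ agree after accounting for the Whitehead factor on $x_i^\epsilon$. Here the key geometric fact, exactly paralleling the earlier proof, is that $e_i$ had distinct endpoints and $e_i'$ lies on the non-shared part of either $p_{\iota(e_i)}$ or $\overline{p_{\tau(e_i)}}$; removing the shared initial segment $a$ is what guarantees $e_i'$ appears exactly once (and not its inverse) in the relevant path, pinning down the sign $\epsilon$. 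Then for $j \neq i$ I would use the Basis Theorem to read off the decomposition: after the swap, the only non-tree edge that a path $p_{\iota(e_j)}$ or $p_{\tau(e_j)}$ can now cross is $e_i'$ (which is newly outside $T'$), so removing the tree edges of $l_j$ and reducing leaves a word of the form $[l_i']^{\alpha_j}[l_j'][l_i']^{-\beta_j}$ with $\alpha_j = \chi_A(x_j)$ and $\beta_j = \chi_A(x_j^{-1})$, precisely matching the definition of $A$ in the statement.

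With these two decompositions in hand, the conclusion follows by the same chain of equalities as before: for $j\neq i$ one computes
\begin{align*}
\phi_{\mathcal{G}'}\cdot(A,x_i^\epsilon)(x_j) &= \phi_{\mathcal{G}'}\bigl(x_i^{\epsilon\alpha_j} x_j x_i^{-\epsilon\beta_j}\bigr) \\
&= f_*\bigl([l_i']^{\alpha_j}[l_j'][l_i']^{-\beta_j}\bigr) \\
&= f_*(l_j) = \phi_{\mathcal{G}}(x_j),
\end{align*}
and the $j=i$ case is handled by the first step. The main obstacle I anticipate is \emph{bookkeeping the sign $\epsilon$ correctly}: unlike the type-2 fold, where $e_i$ was a loop and $p_{\iota(e_i)}=p_{\tau(e_i)}$ collapsed the analysis, here $e_i$ genuinely has two distinct endpoints, so I must carefully distinguish the two cases $e_i'\in p_{\iota(e_i)}$ versus $e_i'\in\overline{p_{\tau(e_i)}}$ and verify that the orientation conventions make the stated formula for $A$ (with $x_i^\epsilon\in A$, $x_i^{-\epsilon}\notin A$) come out consistently in both. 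Getting the interaction between the chosen orientation of $e_i'$ and the direction in which the reduced paths traverse it right is where an error is most likely to creep in, so I would verify the sign explicitly on a small example (say $G$ a theta-graph) before committing to the general argument.
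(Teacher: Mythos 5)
Your route is the same as the paper's --- express the old basis loops $l_j$ in terms of the new ones $l_j'$ via the Basis Theorem, then push the identity through $f_*$ --- but the one formula you actually commit to is wrong in exactly the case you flagged as dangerous, and handling that case correctly is the real content of the proposition. For $j \neq i$ you claim that deleting the $T'$-edges of $l_j$ leaves $[l_i']^{\alpha_j}[l_j'][l_i']^{-\beta_j}$; this is correct only when $\epsilon = +1$. By Proposition \ref{p:tree orientation}, exactly one of $e_i'$, $\overline{e_i'}$ lies in the orientation $\mathcal{O}(T,b)$: it is $e_i'$ when $e_i' \in p_{\iota(e_i)}$ (so $\epsilon=1$) and $\overline{e_i'}$ when $\overline{e_i'} \in p_{\tau(e_i)}$ (so $\epsilon=-1$). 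Consequently every tree-path $p_{\iota(e_j)}$ or $p_{\tau(e_j)}$ that meets the pair $\{e_i',\overline{e_i'}\}$ crosses it, once, in the direction $e_i'^{\,\epsilon}$, and the correct decomposition is
\[ l_j = [\,l_i'^{\,\epsilon\alpha_j}.l_j'.l_i'^{\,-\epsilon\beta_j}\,], \]
which is what the paper proves. Your displayed chain has the matching defect: since $\phi_{\mathcal{G}'}(x_i^{\epsilon\alpha_j}x_jx_i^{-\epsilon\beta_j}) = f_*(l_i')^{\epsilon\alpha_j}f_*(l_j')f_*(l_i')^{-\epsilon\beta_j}$, the middle term must carry the $\epsilon$'s as well. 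As written, both your decomposition and your second equality fail whenever $\epsilon=-1$ and $\alpha_j$ or $\beta_j$ is nonzero, so the $\epsilon=-1$ case --- the main point at which this proposition differs from Proposition \ref{type2fold} --- is left unproved. Once the $\epsilon$'s are inserted in both places, your argument closes exactly as in the paper.

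Two smaller corrections. There is no quotient map here (the graph $G$ is unchanged), so ``$q_*(l_i)=l_i'$ in the appropriate sense'' should simply be the assertion $l_i = l_i'$, and this holds on the nose, not merely ``after accounting for the Whitehead factor'': writing, in the case $\epsilon=1$, $p_{\iota(e_i)} = a.u_1.e_i'.u_2$ and $p_{\tau(e_i)} = a.w$, the new tree-paths are $p'_{\iota(e_i')} = a.u_1$ and $p'_{\tau(e_i')} = a.w.\bar{e}_i.\overline{u_2}$, whence $l_i' = a.u_1.e_i'.u_2.e_i.\bar{w}.\bar{a} = l_i$; the case $\epsilon=-1$ is symmetric. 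Since $(A,x_i^\epsilon)$ fixes $x_i$, there is nothing to account for, and $\phi_{\mathcal{G}}(x_i) = \phi_{\mathcal{G}'}(x_i)$ follows immediately. Also, the role of excluding the shared initial segment $a$ is not primarily to pin down $\epsilon$: it is what makes $T'$ connected, hence a maximal tree at all (if $e_i'$ lay in $a$, deleting it would separate $b$ from both endpoints of $e_i$), though you may take this as given since the statement assumes the swap has already been performed.
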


\begin{proof}
The proof is analogous to the proof of Proposition \ref{type2fold}. Let $l_1',\ldots,l_n'$ be the new basis of $\pi_1(G,b)$ given by $b$ and $\{e_1,\ldots,e_i',\ldots,e_n\}$. By reading off the edges that lie outside of $T'$ crossed by the paths $l_j$ we find that $l_i=l_i'$ and for $j\neq i$ we have $l_j=[l_i'^{\epsilon{\alpha_j}}.l_j'.l_i'^{-\epsilon \beta_j}]$, where $\alpha_j=\chi_A(x_j)$ and $\beta_j=\chi_A(x_j)$. It follows that  $\phi_\mathcal{G}=\phi_{\mathcal{G}'} \cdot (A,x_i^{\epsilon}).$ \end{proof}

\begin{remark} \label{foldingremark}
If we are looking at a fold of the first type in Figure \ref{FoldingDiagram2}, we would like both edges $t_1$ and $t_2$ to lie in the maximal tree T. If we move one edge $t_1$ into the maximal tree through the method described above, the edge $t_2$ may still lie outside the maximal tree. We would like to add it in without removing $t_1$.  We are only unable to do this if $t_1$ and $\bar{t}_1$ are the only elements of $p_{\iota(t_2)} \ssm a$ and $p_{\tau(t_2)}\ssm a$. This means that $\{p_{\iota(t_2)},p_{\tau(t_2)}\}$ is either the set $\{a,a.t_1\}$ or the set $\{a,a.\bar{t}_1\}$.  These cases would contradict either $\iota(t_1)=\iota(t_2)$ or $\tau(t_1)\neq\tau(t_2).$ 
\end{remark}

\section{Applications}\label{s:applications}

In this section we show how the algorithm described in Section \ref{s:algorithm} may be applied to find generating sets of subgroups of $\aut(F_n)$.

\subsection{Fixing generators.}

Let $\r_{ij},K_{ij},$ and $S_i$ be the elements of $\aut(F_n)$ defined by: \begin{align*} \r_{ij}(x_k)&=\begin{cases} x_ix_k &\text{if $k=i$} \\ x_k &\text{if $k\neq i$} \end{cases}, \\ K_{ij}(x_k)&=\begin{cases} x_ix_kx_i^{-1} &\text{if $k=i$} \\ x_k &\text{if $k\neq i$} \end{cases}, \\ S_i(x_k)&=\begin{cases} x_i^{-1} &\text{if $k=i$} \\ x_k &\text{if $k \neq i$} \end{cases}. \end{align*}

These elements are called a \emph{right Nielsen automorphism}, a \emph{partial conjugation} and an \emph{inversion} respectively. Any Whitehead automorphism can be written as a product of the above elements. Let  $\text{Fix}(\{x_{m+1},\ldots,x_n\})$ be the subgroup of $\aut(F_n)$ consisting of elements that fix $x_{m+1},\ldots,x_n$ pointwise, and let  $\text{Fix}_c(\{x_{m+1},\ldots,x_n\})$ be the subgroup of $\aut(F_n)$ that takes each element of the set $\{x_{m+1},\ldots,x_n\}$ to a conjugate of itself. 

\begin{figure}[ht]
\centering
\includegraphics{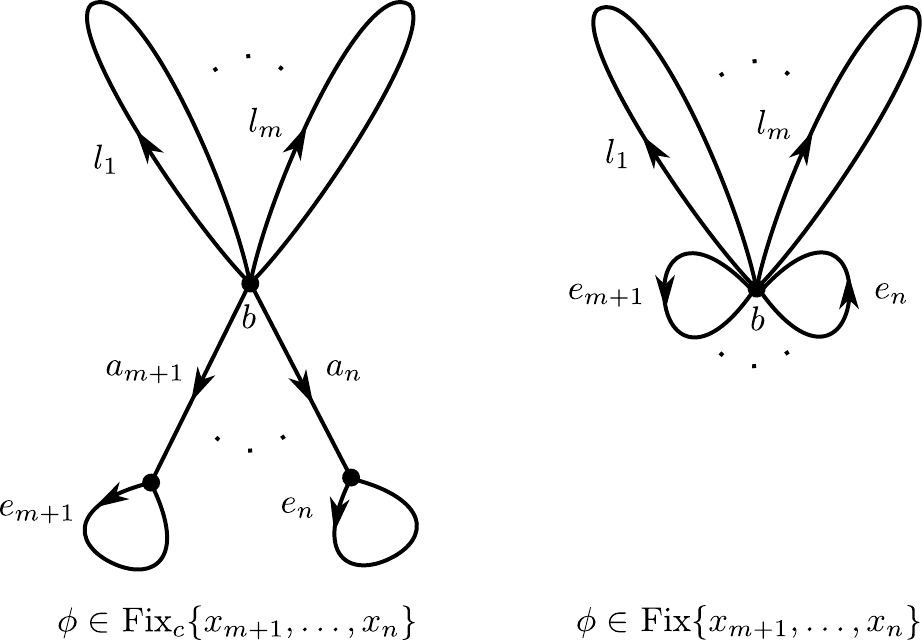}
\caption{The Construction of $G$ in Theorem \ref{generatingsets1}}
\label{Graphs}
\end{figure}

\begin{theorem} \label{generatingsets1}
Let $Y=\{x_{m+1},\ldots,x_n\}$ be a subset of our preferred basis for $F_n$. The subgroups $\Fix(Y)$ and $\Fix_c(Y)$ are generated by the Whitehead automorphisms that lie in $\Fix(Y)$ and $\Fix_c(Y)$ respectively. In terms of Nielsen automorphisms, generating sets for $\Fix(Y)$, $\Fix_c(Y)$ are given by 
\begin{align*}                                                                                                                                                                                                                             \mathcal{A}_m&=\{S_i,\r_{ij}:1 \leq i \leq m, 1 \leq j \leq n\},\\
\mathcal{B}_m&= \mathcal{A}_m\cup\{K_{ij}:m+1\leq i\leq n, 1 \leq j \leq n\},                                                                                                                                                                                                                                                                                         \end{align*}
respectively.
\end{theorem}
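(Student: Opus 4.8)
**

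The plan is to use the folding algorithm from Section~\ref{s:algorithm} as a machine that, starting from an element $\phi$ of $\Fix(Y)$ or $\Fix_c(Y)$, produces a decomposition as a product of Whitehead automorphisms \emph{each of which individually lies in the same subgroup}. The key is to choose the initial branded graph $\G$ so that the special structure of $Y$ (being part of the basis) is visible and preserved by every fold and tree-swap. Concretely, for $\phi \in \Fix(Y)$ with $\phi(x_i)=w_i$, we have $w_i = x_i$ for $i > m$, so in the graph of Example~\ref{e:auto} the loops corresponding to $x_{m+1},\ldots,x_n$ are single undivided edges; see Figure~\ref{Graphs}. First I would set up this graph, declare these $n-m$ unsubdivided loops to be the edges $e_{m+1},\ldots,e_n$ (outside the maximal tree), and observe that every fold in the algorithm happens among the subdivided loops carrying $w_1,\ldots,w_m$, since an immersion failure can never involve an undivided $x_i$-loop (it maps to $x_i$ with nothing to cancel against at the basepoint unless another petal starts with $x_i^{\pm1}$, which the folding of the $w_j$-petals handles without ever consuming the $e_i$-loop itself).

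Next I would track the invariant through each step of the algorithm. The output decomposition is $\phi = \phi_{\G_k}(\phi_{\G_k}^{-1}\phi_{\G_{k-1}})\cdots(\phi_{\G_2}^{-1}\phi_{\G_1})$, so it suffices to show that each factor $\phi_{\G_{j+1}}^{-1}\phi_{\G_j}$ lies in the subgroup, together with $\phi_{\G_k}\in W_n$. By Propositions~\ref{type1fold}, \ref{type2fold}, and \ref{tree sub}, each such factor is either trivial or a Whitehead automorphism $(A,x_i^\epsilon)$ with $i \le m$ (because the folding/swapping edge is always associated to one of the subdivided petals indexed $1,\ldots,m$). A Whitehead automorphism $(A,x_i^{\epsilon})$ with $i \le m$ fixes $x_j$ for every $j$ with $x_j^{\pm1}\notin A$; I would check that the defining data forces $x_j^{\pm 1}\notin A$ for $j>m$ whenever $\phi\in\Fix(Y)$ (the tree paths $p_{\iota},p_{\tau}$ for the undivided $e_j$-loops never cross the relevant tree edge), so the factor lies in $\Fix(Y)$. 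The same argument with $\Fix_c$ in place of $\Fix$ uses that a partial-conjugation-type generator $K_{ij}$ with $j>m$ is permitted, which is exactly why $\mathcal{B}_m$ enlarges $\mathcal{A}_m$ by the $K_{ij}$ with $m+1\le i\le n$.

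Finally I would translate the Whitehead-automorphism generating set into the Nielsen generators. Every $(A,a)\in\Fix(Y)$ decomposes into $\r_{ij}$ and $S_i$ with $i\le m$, and every $(A,a)\in\Fix_c(Y)$ additionally into partial conjugations $K_{ij}$; conversely each listed generator manifestly lies in the claimed subgroup. I expect the main obstacle to be the careful verification that the folding and tree-substitution steps never force the algorithm to modify, reorient, or fold one of the reserved edges $e_{m+1},\ldots,e_n$, i.e.\ that the subgroup membership is genuinely preserved at \emph{every} intermediate branded graph $\G_j$ and not merely at the endpoints. Remark~\ref{foldingremark} and the explicit descriptions of $A$ in Propositions~\ref{type2fold} and \ref{tree sub} are the tools I would lean on to rule out the bad cases, after which the equality of the two descriptions (Whitehead versus Nielsen) of the generating sets is routine.
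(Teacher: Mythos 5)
Your overall strategy---run the folding algorithm on a suitably chosen initial branded graph and argue that every factor $\phi_{\G_{j+1}}^{-1}\phi_{\G_j}$, together with the terminal $\phi_{\G_k}\in W_n$, lies in the subgroup---is exactly the paper's. For $\Fix(Y)$ your argument is essentially right, but it contains a false intermediate claim: it is \emph{not} true that folds never involve the undivided loops $e_{m+1},\ldots,e_n$, nor that every factor is $(A,x_i^{\epsilon})$ with $i\le m$. If some $w_j$ begins or ends with a letter $x_i^{\pm1}$ with $i>m$, the algorithm is forced at some stage to perform a type 2 fold identifying (the image of) the reserved loop $e_i$ with a tree edge, producing a factor $(A,x_i^{\epsilon})$ with $i>m$. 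This does not break the theorem: a Whitehead automorphism $(A,a)$ fixes its own multiplier letter, and the invariant you correctly identify (the reserved loops sit at the basepoint, so $p_{\iota(e_k)}=p_{\tau(e_k)}$ is trivial and crosses nothing) still forces $x_k^{\pm1}\notin A$ for the remaining $k>m$, so the factor lies in $\Fix(Y)$. That is how the paper argues, via Propositions \ref{type2fold} and \ref{tree sub} directly, without any claim that the reserved edges avoid folds.

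The genuine gap is the $\Fix_c(Y)$ case, which you dispose of as ``the same argument.'' It is not. For $\phi\in\Fix_c(Y)$ one has $\phi(x_i)=w_ix_iw_i^{-1}$ with $w_i$ generally nontrivial, so in the graph of Example \ref{e:auto} the $i$th petal ($i>m$) is honestly subdivided and there are no reserved undivided edges at all; moreover the factors produced by the algorithm need not lie in $\Fix_c(Y)$. Concretely, take $n=2$, $\phi(x_1)=x_1$, $\phi(x_2)=x_1x_2x_1^{-1}$: running the algorithm on the Example \ref{e:auto} graph (folding the $x_1$-loop into the subdivided petal first) decomposes $\phi$ as $x_2\mapsto x_1x_2$ followed by $x_2\mapsto x_2x_1^{-1}$, and neither factor lies in $\Fix_c(\{x_2\})$. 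The paper avoids this by starting from a different graph (Figure \ref{Graphs}): for each $i>m$ one attaches to the basepoint a path of $|w_i|$ edges mapping to $w_i$, with a \emph{single-edge loop} at its far end mapping to $x_i$, and declares that loop to be $e_i$. Since a loop can never lie in, or be swapped into, a maximal tree, each reserved $e_i$ remains a single-edge loop in every $\G_j$, so $p_{\iota(e_i^j)}=p_{\tau(e_i^j)}$ throughout; Propositions \ref{type2fold} and \ref{tree sub} then give $x_i\in A\Leftrightarrow x_i^{-1}\in A$ in every factor, i.e.\ every factor carries each $x_i$, $i>m$, to a conjugate, and $\phi_{\G_k}$ fixes these $x_i$. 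This ``lollipop'' construction, not the graph of Example \ref{e:auto}, is the crux of the $\Fix_c(Y)$ statement, and it is what your proposal is missing.
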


\begin{proof} 
Let $\phi \in \text{Fix}_c(Y)$ and let $G$ be a graph constructed as follows:  take a single vertex $b$ and a loop $l_j$ consisting of $|\phi(x_j)|$ edges about $b$ for $x_1,\ldots,x_m$. We have $\phi(x_j)=w_jx_jw_j^{-1}$ for $x_{m+1},\ldots,x_n$ --- add a path $a_j $ containing $|w_j|$ edges to $b$ for each $j$, and attach an edge loop $e_j$ to the end of each of these paths.  We can then define $f:G \rightarrow \text{R}_n$ by mapping each loop $l_j$ to the edge path $\phi(x_j)$, each path $a_j$ to the edge path $w_j$, and the edge loops $e_{m+1},\ldots,e_n$ to the edges $x_{m+1},\ldots,x_n$ respectively (see Figure \ref{Graphs}).  Pick an edge $e_j$ in each $l_j$ oriented in the direction of the word $\phi(x_j)$ being spelt out by $l_j$.  Then $\phi$ is the automorphism associated to the branded graph $\mathcal{G}=(G,f,b,\{e_1,\ldots,e_n\})$.  We apply the algorithm described in Section \ref{s:algorithm} to write $\phi$ as a product of Whitehead automorphisms. Let $\G =\G_1,\G_2, \ldots, \G_k$ be the 
sequence of branded graphs $\G_j=(G_j,f_j, b_j, \{e_1^j,\ldots,e_n^j\})$ obtained. Let $e_i$ be an edge in $\{e_{m+1},\ldots,e_n\}.$ Then each $e_i^j$ is a loop, and will never be swapped into a maximal tree, so $e_i^j \rightarrow e_i^{j+1}$ at each step in the folding process. As $\iota(e_i^j)=\tau(e_i^j)$, we have $p_{\iota(e_i^j)}=p_{\tau(e_i^j)}$ at each step, so by Propositions \ref{type2fold} and \ref{tree sub} the only Whitehead automorphisms of the form $(A,a)$ that occur in the decomposition of $\phi_\mathcal{G}$  take $x_j$ to a conjugate.  Also, $\phi_{\mathcal{G}_k} \in W_n$ fixes $x_{m+1},\ldots,x_n$.  Hence the Whitehead automorphisms that lie in $\Fix_c(Y)$ generate $\text{Fix}_c(Y)$. In the case where $x_{m+1},\ldots,x_n$ are completely fixed by $\phi$, the loops $e_{m+1}^j,\ldots,e_n^j$ are at the basepoint of each graph in the folding process, therefore Propositions \ref{tree sub} and \ref{type2fold} tell us every Whitehead automorphism that occurs in the decomposition of $\phi$ will fix $x_
{m+1},\ldots,x_n$. To obtain the generating sets in terms of Nielsen automorphisms one checks that each Whitehead automorphism that lies in $\Fix(Y)$ may be written as a product of elements of $\mathcal{A}_m$, and that each Whitehead automorphism that lies in $\Fix_c(Y)$ may be written as a product of elements that lie in $\mathcal{B}_m$.
\end{proof}

\subsection{Fix$_c(\{x_{m+1},\ldots,x_n\}) \cap \ian$} \label{s:ian}
Let $IA_n$ be the subgroup of $\aut(F_n)$ that acts trivially on the abelianisation of $F_n$. Magnus \cite{MR1555401} showed that $IA_n$ is generated by elements of the form:
\begin{align*} K_{ij}(x_l)&=\begin{cases} x_jx_ix_j^{-1} & i=l \\
               x_l & i \neq l
              \end{cases} \\
 K_{ijk}(x_l)&=\begin{cases} x_i[x_j,x_k] & i=l \\
               x_l & i \neq l,
              \end{cases} \end{align*}
where $i$, $j$, and $k$ are distinct. Again we take $Y=\{x_{m+1},\ldots,x_n\}$ to be a subset of our fixed basis for $F_n$. We shall use an adaptation of Magnus' proof to show that $\Fix_c(Y) \cap \ian$ is generated by Magnus' generators that lie in $\Fix_c(Y)$.  (This includes Magnus' theorem in the case $Y=\emptyset$.)

We use the following general observation: let $G$ be a group, $H$ a normal subgroup of $G$ and $\overline{G} = G/H$.  Let $A$ be a generating set of $G$, let $\overline{A}$ be the image of $A$ in $\overline{G}$, and let $R$ be a set of words in $G$ such that $\overline{G}$ has the presentation $\overline{G}= \langle \overline{A} | \overline{R} \rangle.$  Then $H$ is the subgroup of $G$ normally generated by the elements of $R$. If $B$ is a subset of $H$ such that $B$ generates a normal subgroup of $G$ and this subgroup contains $R$, then $B$ is a generating set of $H$.   

We shall proceed as follows: we first find a presentation for the group 
\begin{equation*} G_m = \left\{ \begin{pmatrix} A & 0 \\ B & I \end{pmatrix} : A \in \GL_m(\mathbb{Z}), B \in M_{n-m, m}(\mathbb{Z}) \right\} \leq \GL_n(\mathbb{Z}) \end{equation*}
in Proposition \ref{linearpresentations}.  The group $G_m$ is the image of $\Fix_c(Y)$ under the map $\Theta: \aut(F_n)\rightarrow \GL_n(\mathbb{Z})$.  Hence the kernel of this restricted map is $\Fix_c(Y) \cap \ian$. It only remains to check that all the relations lie in the subgroup generated by our chosen set, and that this set generates a normal subgroup of $\Fix_c(Y)$.

Let $M_{ij}$ be the matrix taking the value 1 in the $(i,j)$th entry, and zeroes everywhere else.  When $i \neq j$ let $E_{ij}=I+M_{ij}$, and let $T_i=I -2M_{ii}$, the matrix that takes the value $-1$ in the $(i,i)$th entry, $1$ in the other diagonal entries, and zero everywhere else. The group $G_m$ is isomorphic to the semidirect product $\mathbb{Z}^{(n-m)m} \rtimes \GL_m(\mathbb{Z})$, where \begin{align*}\mathbb{Z}^{(n-m)m} &\cong \left\{ \begin{pmatrix} I & 0 \\ B & I \end{pmatrix}  \in G_m \right\} \\ \GL_m(\mathbb{Z}) &\cong \left\{ \begin{pmatrix} A & 0\\ 0 & I \end{pmatrix} \in G_m  \right\},\end{align*}

therefore to find a presentation of $G_m$ it is sufficient to find presentations for $\mathbb{Z}^{(n-m)m}$ and $\GL_m(\mathbb{Z}),$ and relations that describe the action of $\GL_m(\mathbb{Z})$ on $\mathbb{Z}^{(n-m)m}$ by conjugation. The $\mathbb{Z}^{(n-m)m}$ part of $G_m$ has the obvious presentation $\langle\, E_{ij}\, | \, R_{1,m} \, \rangle, $ where $m+1 \leq i \leq n$, $1 \leq j \leq m$ and $R_{1,m}$ contains the commutators of these elements. The $\GL_m(\mathbb{Z})$ part of $G_m$ has a presentation $\langle\, T_1, E_{ij}\, | \, R_{2,m} \, \rangle,$ where $1 \leq i,j \leq m$ and 
\[ R_{2,m}=\left\{ \begin{aligned}&T_1^2 &\\
&(E_{12}E_{21}^{-1}E_{12})^4  &\\
&E_{12}E_{21}^{-1}E_{12}E_{21}E_{12}^{-1}E_{21} &\\
&[E_{ij},E_{kl}] &i\neq k, j \neq l \\ 
&[E_{ij},E_{jk}]E_{ik}^{-1}  &i,j,k \text{ distinct} \\
&[T_1,E_{ij}] &i\neq 1, j \neq 1\\
&T_1E_{ij}T_1E_{ij} &1 \in \{i,j\}
\end{aligned} \right\} . \]

This is easily deduced from the Steinberg presentation of $\SL_n(\mathbb{Z}),$ which can be found in  \cite[pages 81--82]{MR0349811}, and the decomposition $\GL_n(\mathbb{Z})=\SL_n(\mathbb{Z})\rtimes \langle T_1 \rangle.$  There is an exception for $m=1$, which has the much simpler presentation $\langle\, T_1\, |\, T_1^2\, \rangle.$  The relations that occur from the action of $\GL_m(\mathbb{Z})$ on $\mathbb{Z}^{(n-m)m}$ by conjugation are of the form:
\[ R_{3,m}=\left\{ \begin{aligned} 
&E_{ij}E_{kl}E_{ij}^{-1}=E_{kl} &i \neq k \\
&E_{ij}E_{kl}E_{ij}^{-1}=E_{kj}^{-1}E_{kl}  &\text{$i=l$ and $i,j,k$ are distinct}\\
&T_1E_{kl}T_1=E_{kl} &k,l \neq 1\\
&T_1E_{kl}T_1=E_{kl}^{-1} &1 \in \{k,l\}
\end{aligned} \right\} \]

where $E_{ij}$ is taken over elements in our copy of $\GL_m(\mathbb{Z})$ and $E_{kl}$ is taken over elements in our copy of $\mathbb{Z}^{(n-m)m}$.  Summarising:

\begin{proposition} \label{linearpresentations} 
\mbox{}
\[\langle T_1, E_{ij} \quad 1 \leq i \leq n,\; 1 \leq j \leq m\; | R_{1,m} \cup R_{2,m} \cup R_{3,m} \rangle\]
is a presentation of $G_m$.
\end{proposition}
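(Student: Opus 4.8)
The plan is to read the displayed presentation as the standard presentation of the semidirect product $G_m \cong \mathbb{Z}^{(n-m)m} \rtimes \GL_m(\mathbb{Z})$ identified in the preceding discussion, and to verify its three ingredients separately. First I would isolate the general tool: if $G = N \rtimes Q$ with $N = \langle S_N \mid R_N\rangle$ and $Q = \langle S_Q \mid R_Q\rangle$, and if for each $s \in S_Q$ and $t \in S_N$ we write the element $sts^{-1} \in N$ as a word $w_{s,t}$ in $S_N$, then
\[ G = \langle\, S_N \sqcup S_Q \mid R_N,\ R_Q,\ \{\, s t s^{-1} = w_{s,t}\,\} \,\rangle. \]
This is standard; the argument I have in mind is that the evident homomorphism from the group $\Gamma$ presented by the right-hand side onto $G$ is surjective, the conjugation relations rewrite every word of $\Gamma$ as (a word in $S_N$)(a word in $S_Q$), the retraction $\Gamma \to Q$ killing $S_N$ identifies $\langle S_Q\rangle$ with $Q$, and the composite $N \to \langle S_N\rangle \to G$ being a section of $N \hookrightarrow G$ identifies $\langle S_N\rangle$ with $N$; uniqueness of the factorisation $G = NQ$ then forces injectivity.

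With this in hand it remains to match the three pieces. Identifying $N = \mathbb{Z}^{(n-m)m}$ with the matrices $\begin{pmatrix} I & 0 \\ B & I\end{pmatrix}$, its generators are the $E_{ij}$ with $m+1 \leq i \leq n$, $1 \leq j \leq m$, and $R_{1,m}$ (all their commutators) is exactly the presentation of a free abelian group of the right rank; this is immediate. Identifying $Q = \GL_m(\mathbb{Z})$ with the matrices $\begin{pmatrix} A & 0 \\ 0 & I\end{pmatrix}$, I would deduce $\langle T_1, E_{ij} \mid R_{2,m}\rangle$ from the Steinberg presentation of $\SL_m(\mathbb{Z})$ together with the splitting $\GL_m(\mathbb{Z}) = \SL_m(\mathbb{Z}) \rtimes \langle T_1\rangle$, treating the degenerate cases $m=1$ (where the presentation collapses to $\langle T_1 \mid T_1^2\rangle$) and $m=2$ (where the $\SL_2$ relations replace the generic commutator relations) by hand.

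The last point is to confirm that $R_{3,m}$ records the conjugation words $w_{s,t}$. Each generator of $Q$ (namely $T_1$, or $E_{ij}$ with $i,j \leq m$) has its off-diagonal index at most $m$, while every generator $E_{kl}$ of $N$ has $k > m$; so a direct computation with elementary matrices using $M_{ab}M_{cd} = \delta_{bc}M_{ad}$ gives $E_{ij}E_{kl}E_{ij}^{-1} = E_{kl}$ when $i \neq l$ and $E_{ij}E_{kl}E_{ij}^{-1} = E_{kj}^{-1}E_{kl}$ when $i = l$, and $T_1 E_{kl} T_1 = E_{kl}$ when $l \neq 1$ and $T_1 E_{kl} T_1 = E_{kl}^{-1}$ when $l = 1$ (the relevant case division being on $l$, since $k > m$ makes $i \neq k$ and $k \neq 1$ automatic). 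These are precisely the relations of $R_{3,m}$, and in particular each $sE_{kl}s^{-1}$ lands back in $N$ and is expressed by a word in $S_N$. Applying the lemma then produces the presentation of $G_m$.

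I expect the main obstacle to be the verification of the $\GL_m(\mathbb{Z})$ block $R_{2,m}$: extracting it cleanly from the Steinberg presentation of $\SL_m(\mathbb{Z})$ and incorporating the $T_1$-action requires care, and the small-rank cases $m = 1$ and $m = 2$ genuinely depart from the generic pattern. By contrast, the abelian piece $R_{1,m}$ and the conjugation computation $R_{3,m}$ are routine once the semidirect-product lemma is in place.
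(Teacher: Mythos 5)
Your proposal follows exactly the paper's route: decompose $G_m \cong \mathbb{Z}^{(n-m)m} \rtimes \GL_m(\mathbb{Z})$, and combine the free abelian presentation ($R_{1,m}$), the Steinberg-derived presentation of $\GL_m(\mathbb{Z})$ with the splitting $\GL_m(\mathbb{Z}) = \SL_m(\mathbb{Z}) \rtimes \langle T_1 \rangle$ ($R_{2,m}$), and the conjugation relations ($R_{3,m}$) via the standard semidirect-product presentation lemma. You actually supply more detail than the paper, which simply asserts these three ingredients: in particular your sketch of the semidirect-product lemma and your elementary-matrix computation for $R_{3,m}$ (whose case division on $i=l$ versus $i \neq l$, with $i \neq k$ and $k \neq 1$ automatic since $k > m$, is the correct reading of the conditions listed in the paper) fill in steps the paper leaves implicit.
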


\begin{theorem}
$\ian \cap \Fix_c(\{x_{m+1},\ldots,x_n\})$ is generated by the set $$\mathcal{C}_m=\{K_{ij} : 1 \leq i \leq n,\; 1\leq j \leq n \} \cup \{K_{ijk} : 1 \leq i \leq m,\; 1 \leq j,k \leq n\}.$$
\end{theorem}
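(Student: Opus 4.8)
The plan is to apply the general observation recorded above to the restriction of $\Theta$ to $\Fix_c(Y)$. This gives a short exact sequence
\[ 1 \longrightarrow H \longrightarrow \Fix_c(Y) \xrightarrow{\ \Theta\ } G_m \longrightarrow 1, \]
with $H = \ian \cap \Fix_c(Y)$, so I would take $G = \Fix_c(Y)$, $\overline{G}=G_m$, and $A$ the generating set $\mathcal{B}_m$ supplied by Theorem \ref{generatingsets1}. The first step is to match the images $\Theta(\mathcal{B}_m)$ with the generators of $G_m$ in Proposition \ref{linearpresentations}: the inversions $S_i$ and transvections $\r_{ij}$ with $i\le m$ abelianise to $T_i$ and to the elementary matrices $E_{ji}$, while every partial conjugation $K_{ij}$ with $i>m$ lies in $\ian$ and maps to the identity. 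Thus $\mathcal{B}_m$ is a set of lifts of a generating set of $G_m$ together with some elements of $H$; adjoining to $R_{1,m}\cup R_{2,m}\cup R_{3,m}$ the redundant relations expressing $T_i$ in terms of $T_1,E_{ij}$ and the relations killing the images of the $K_{ij}$ (\,$i>m$\,) presents $G_m$ on $\Theta(\mathcal{B}_m)$, and lifting these relators gives a set $R\subseteq H$ that normally generates $H$.

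By the observation it now suffices to check, for $B=\mathcal{C}_m$, that (i) $\mathcal{C}_m\subseteq H$, (ii) $\langle\mathcal{C}_m\rangle$ is normal in $\Fix_c(Y)$, and (iii) $R\subseteq\langle\mathcal{C}_m\rangle$. Claim (i) is immediate: each $K_{ij}$ conjugates one basis element and fixes the rest, and each $K_{ijk}$ with $i\le m$ multiplies $x_i$ by a commutator and fixes the rest; both abelianise trivially and, since any $x_l$ with $l>m$ is fixed or conjugated, both lie in $\Fix_c(Y)$. This is precisely where the restriction $i\le m$ on $K_{ijk}$ is forced: for $i>m$ the word $x_i[x_j,x_k]$ is not conjugate to $x_i$, so $K_{ijk}\notin\Fix_c(Y)$.

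For (iii) I would lift each relator and evaluate it as an automorphism. The commutators in $R_{1,m}$ lift to $[\r_{ji},\r_{lk}]$, where the moving letters $x_j,x_l$ have index $\le m$ and the added letters $x_i,x_k$ have index $>m$; these transvections commute, so the relators lift to the identity. The Steinberg relations in $R_{2,m}$ lift to identities of the shape $[\r_{ji},\r_{kj}]\r_{ki}^{-1}=K_{kji}$, together with $T_1$–relations that lift to the identity or to a single partial conjugation; since every index occurring in $R_{2,m}$ is $\le m$, each $K_{ijk}$ produced has first index $\le m$. The action relations $R_{3,m}$ lift to maps of the form $x_j\mapsto x_j[x_i,x_k^{\pm1}]$ with $j\le m$, or to partial conjugations such as $x_1\mapsto x_k^{-1}x_1x_k$; the remaining relators, which kill the redundant generators, lift either to a $K_{ij}\in\mathcal{C}_m$ or to a product of $K$'s with first index at most $m$. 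In every case the lifted relator is a product of elements of $\mathcal{C}_m$, the constraint on $K_{ijk}$ being respected because the transvections and inversions of $\mathcal{B}_m$ only move letters of index at most $m$.

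The main work, and the chief obstacle, is (ii): the normality of $\langle\mathcal{C}_m\rangle$ in $\Fix_c(Y)$. Following Magnus, I would record the effect of conjugating each generator of $\mathcal{C}_m$ by each generator $S_i,\r_{ij}$ (\,$i\le m$\,) and $K_{ij}$ (\,$i>m$\,) of $\Fix_c(Y)$, and verify that the result is again a product of elements of $\mathcal{C}_m$. The commutator identities involved are the classical relations among the $K$'s; the genuinely delicate point is to confirm that the relative index restrictions persist, that is, that conjugating a $K_{ijk}$ with $i\le m$ cannot create a term $K_{i'jk}$ with $i'>m$. This holds because the conjugating automorphisms act separately on $\{x_1,\dots,x_m\}$ and on $\{x_{m+1},\dots,x_n\}$, the latter only by conjugation, so the index carrying the commutator stays $\le m$. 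Once (ii) is established, the observation yields $\langle\mathcal{C}_m\rangle=H$, which is the theorem.
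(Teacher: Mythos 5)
Your overall strategy is the paper's: apply the general observation to the restriction of $\Theta$ to $\Fix_c(Y)$, check that lifts of the relators from Proposition \ref{linearpresentations} lie in $\langle\mathcal{C}_m\rangle$, and establish normality of $\langle\mathcal{C}_m\rangle$ by conjugating its generators by generators of $\Fix_c(Y)$. The bookkeeping differs only cosmetically: the paper deletes $S_2,\ldots,S_m$ from $\mathcal{B}_m$ via the identity $S_i=S_1\r_{1i}\r_{i1}^{-1}S_1\r_{1i}^{-1}S_1\r_{1i}S_1\r_{i1}\r_{1i}^{-1}S_1$, so that its generating set maps onto exactly the generators $T_1, E_{ij}$ of the presentation, whereas you keep all of $\mathcal{B}_m$ and adjoin redundant relators; both are legitimate applications of the observation.

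There are, however, two concrete problems in your verifications. First, your claim that the relators in $R_{1,m}$ lift to the identity because ``these transvections commute'' is false when the two elementary matrices share a column: for $j=l$ and $i\neq k$ (both $>m$), the transvections $\r_{ji}$ and $\r_{jk}$ both multiply the same letter $x_j$, and their commutator sends $x_j$ to $x_j$ times a commutator of $x_i^{\pm1}$ and $x_k^{\pm1}$ --- a nontrivial element of $\ian$, namely a generator of type $K_{jik}$. Your conclusion survives, since $j\leq m$ places this element in $\mathcal{C}_m$, but the stated reason is wrong; indeed, these nontrivial lifts are precisely why the generators $K_{ijk}$ must appear in $\mathcal{C}_m$ at all. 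Second, the normality check --- which you rightly identify as the main work --- is left at the level of a heuristic (``the conjugating automorphisms act separately on the two blocks''). That heuristic explains why no forbidden index can appear in a putative expression, but it does not produce the expression: one must actually exhibit each conjugate as a word in $\mathcal{C}_m$, and the paper shows this is not routine, the conjugates $\r_{pk}K_{kpq}\r_{pk}^{-1}$ and $\r_{pk}^{-1}K_{kpq}\r_{pk}$ requiring explicit ten- and eight-term identities in the $K$'s. Supplying (or verifying) such identities is the real content of step (ii), and the proposal as written does not contain it.
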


\begin{proof}
We can remove the elements $S_2,\ldots,S_m$ from the generating set $\mathcal{B}_m$ of the group $\Fix_c(Y),$ as $S_i=S_1\r_{1i}\r_{i1}^{-1}S_1\r_{1i}^{-1}S_1\r_{1i}S_1\r_{i1}\r_{1i}^{-1}S_1$, to make a smaller generating set $\mathcal{B}_m'$. Then $\mathcal{B}_m'$ maps onto the generating set of $G_m$ given in Proposition \ref{linearpresentations} by taking $\r_{ij} \rightarrow E_{ji}$, $S_1 \rightarrow T_1$. The elements $K_{ij}$ are taken to the identity matrix. From the discussion given above, it suffices to show that $\langle \mathcal{C}_m \rangle$ is a normal subgroup of $\Fix_c(Y)$ that contains the lift of each element of $R_{1,m} \cup R_{2,m} \cup R_{3,m}$ obtained by swapping $E_{ij}$ with $\r_{ji}$ and $T_1$ with $S_1$.  It is not hard to check that the lift of each relation to $\aut(F_n)$ lies in $\langle \mathcal{C}_m \rangle$.  To prove normality it is sufficient to show that the conjugate of every element of $\mathcal{C}_m$ by each element of $\mathcal{B}_m'\cup\mathcal{B}_m'^{-1}$ lies in $\
langle \mathcal{C}_m \rangle$.  Most of these computations are simple, except in the case of $\r_{pk}K_{kpq}\r_{pk}^{-1}$ and $\r_{pk}^{-1}K_{kpq}\r_{pk}$, which we write as products of elements of $\mathcal{C}_m$ below:
\begin{align*}
\r_{pk}K_{kpq}\r_{pk}^{-1}&=K_{qk}K_{qp}K_{pq}K_{pqk}K_{kp}K_{kpq}K_{kq}^{-1}K_{kp}^{-1}K_{qp}^{-1}K_{qk}^{-1} \\
\r_{pk}^{-1}K_{kpq}\r_{pk}&=K_{qk}^{-1}K_{qp}K_{pq}^{-1}K_{qp}^{-1}K_{kpq}K_{pqk}K_{qk}K_{kq}. \qedhere \end{align*} \end{proof}

\bibliographystyle{plain}
\bibliography{bib}{}

\begin{thebibliography}{10}

\bibitem{BH}
M.~Bestvina and M.~Handel.
\newblock Train-tracks for surface homeomorphisms.
\newblock {\em Topology}, 34(1):109--140, 1995.

\bibitem{Carette}
Mathieu Carette.
\newblock {\em The automorphism group of accessible groups and the rank of
  Coxeter groups}.
\newblock PhD thesis, Universit\'e libre de Bruxelles, 2009.

\bibitem{MR830040}
Marc Culler and Karen Vogtmann.
\newblock Moduli of graphs and automorphisms of free groups.
\newblock {\em Invent. Math.}, 84(1):91--119, 1986.

\bibitem{DV}
Warren Dicks and Enric Ventura.
\newblock {\em The group fixed by a family of injective endomorphisms of a free
  group}, volume 195 of {\em Contemporary Mathematics}.
\newblock American Mathematical Society, Providence, RI, 1996.

\bibitem{FH}
Mark Feighn and Michael Handel.
\newblock The recognition theorem for {${\rm Out}(F_n)$}.
\newblock {\em Groups Geom. Dyn.}, 5(1):39--106, 2011.

\bibitem{F}
Max Forester.
\newblock Deformation and rigidity of simplicial group actions on trees.
\newblock {\em Geom. Topol.}, 6:219--267 (electronic), 2002.

\bibitem{GL}
Vincent Guirardel and Gilbert Levitt.
\newblock The outer space of a free product.
\newblock {\em Proc. Lond. Math. Soc. (3)}, 94(3):695--714, 2007.

\bibitem{MR0340420}
P.~J. Higgins and R.~C. Lyndon.
\newblock Equivalence of elements under automorphisms of a free group.
\newblock {\em J. London Math. Soc. (2)}, 8:254--258, 1974.

\bibitem{KWM}
Ilya Kapovich, Richard Weidmann, and Alexei Miasnikov.
\newblock Foldings, graphs of groups and the membership problem.
\newblock {\em Internat. J. Algebra Comput.}, 15(1):95--128, 2005.

\bibitem{MR0577064}
Roger~C. Lyndon and Paul~E. Schupp.
\newblock {\em Combinatorial group theory}.
\newblock Springer-Verlag, Berlin, 1977.
\newblock Ergebnisse der Mathematik und ihrer Grenzgebiete, Band 89.

\bibitem{MR1555401}
Wilhelm Magnus.
\newblock \"{U}ber {$n$}-dimensionale {G}ittertransformationen.
\newblock {\em Acta Math.}, 64(1):353--367, 1935.

\bibitem{MR0396764}
James McCool.
\newblock Some finitely presented subgroups of the automorphism group of a free
  group.
\newblock {\em J. Algebra}, 35:205--213, 1975.

\bibitem{MR0349811}
John Milnor.
\newblock {\em Introduction to algebraic {$K$}-theory}.
\newblock Princeton University Press, Princeton, N.J., 1971.
\newblock Annals of Mathematics Studies, No. 72.

\bibitem{N}
J~Nielsen.
\newblock Om {R}egning med ikke kommutative {F}aktorer og dens {A}nvendelse i
  {G}ruppetorien.
\newblock {\em Mat. Tidsskrift B}, pages 77--94, 1921.

\bibitem{MR0131452}
Elvira~Strasser Rapaport.
\newblock On free groups and their automorphisms.
\newblock {\em Acta Math.}, 99:139--163, 1958.

\bibitem{serretrees}
J.P. Serre.
\newblock {\em Trees}.
\newblock Springer--Verlag, Berlin--New York, 1980.

\bibitem{MR695906}
John~R. Stallings.
\newblock Topology of finite graphs.
\newblock {\em Invent. Math.}, 71(3):551--565, 1983.

\bibitem{MR1503309}
J.~H.~C. Whitehead.
\newblock On equivalent sets of elements in a free group.
\newblock {\em Ann. of Math. (2)}, 37(4):782--800, 1936.

\end{thebibliography}

\end{document}